\newtheorem{theorem}{Theorem}[section]
\newtheorem{proposition}[theorem]{Proposition}
\newtheorem{corollary}[theorem]{Corollary}
\newtheorem{remark}[theorem]{Remark}
\theoremstyle{definition}
\newtheorem{assumption}{Assumption}
\newtheorem*{example}{Example}
\newcommand{\R}{\mathbb{R}}
\newcommand{\C}{\mathbb{C}}
\newcommand{\Ab}{\mathbf{A}}
\newcommand{\eps}{\varepsilon}
\DeclareMathOperator{\supp}{supp}
\DeclareMathOperator{\Div}{div}
\DeclareMathOperator{\curl}{curl}
\DeclareMathOperator*{\Osc}{Osc}
\DeclareMathOperator{\Hess}{Hess}
\author{Bernard Helffer}
\address[Bernard Helffer]{ Laboratoire de Math\'ematiques Jean Leray, 
Universit\'e de Nantes,  2 rue de la Houssini\`ere, 44322 Nantes, France
and Laboratoire de
Math\'{e}matiques, Universit\'{e} Paris-Sud, France. 
}
\email{bernard.helffer@univ-nantes.fr}
\author{Hynek Kova\v r\'{\i}k}
\address[Hynek Kova\v r\'{\i}k]{DICATAM, Sezione di Matematica, Universit\`a degli studi di Brescia, Italy.}
\email{hynek.kovarik@unibs.it}
\author{Mikael P. Sundqvist}
\address[Mikael P. Sundqvist]{Lund University, Department of Mathematical 
Sciences, Box 118, 221\ 00 Lund, Sweden.}
\email{mikael.persson\_sundqvist@math.lth.se}
\subjclass[2010]{35P15; 81Q05, 81Q20}
\keywords{Pauli operator, Dirichlet, semi-classical, flux effects}
\begin{document}

\title[On the ground state energy of the Pauli operator]{%
On the semi-classical analysis of
the ground state energy of
the Dirichlet Pauli operator III:
Magnetic fields that change sign.
}

\begin{abstract}
We consider the semi-classical Dirichlet Pauli operator in bounded connected 
domains in the plane. Rather optimal results have been obtained  in previous papers by  Ekholm--Kova\v{r}\'ik--Portmann and
Helffer--Sundqvist 
for the asymptotics of the ground state energy in the semi-classical limit when the magnetic field has constant sign. In this paper,  we focus  
on the case when the magnetic field changes
sign. We show, in particular, that the
ground state energy of this Pauli operator will be exponentially small as the
semi-classical parameter tends to zero and give lower bounds and upper bounds for  this decay rate.
Concrete examples of magnetic fields changing sign on 
the unit disc are discussed.  Various natural conjectures are disproved and this leaves the 
research of an optimal result in the general case still open.
\end{abstract}

\maketitle

\section{Introduction}

\subsection{The Pauli operator}
Let $\Omega$ be a bounded, open, and connected domain in $\mathbb R^2$, let
$B:\Omega\to \R$ be a bounded magnetic field and $h>0$ a semi-classical parameter.
We are interested in the analysis of the ground state energy $\Lambda^D(h,B,\Omega)$ of  
the Dirichlet realisation of the Pauli operator
\begin{equation} \label{pauli-op}
P(B,h) =  \begin{pmatrix}
P_+(B,h)& 0  \\
0 &   P_-(B,h)
\end{pmatrix},
\end{equation}
in $L^2(\Omega, \C^2)$.
Here, the spin-up component $P_+(B,h)$ and spin-down component $P_-(B,h)$ are defined by
\begin{equation}
P_\pm(B,h):= (hD_{x_1} -A_1)^2 + (hD_{x_2}-A_2)^2 \pm h B(x), 
\end{equation}
$D_{x_j} = -i \partial _{x_j}$ for $j=1,2,$ and the vector potential
$\Ab=(A_1,A_2)$ satisfies
\begin{equation}\label{eq:0}
B(x) = \partial_{x_1} A_2 - \partial_{x_2} A_1, \qquad \forall\ x \in\Omega. 
\end{equation}
The reference to $\Ab$ is not necessary when $\Omega$ is simply connected, in which case it will be omitted, but it could play an important role if the domain is not simply connected. 
In the sequel we will write
\begin{equation} 
\lambda_{\pm}^D(h,\Ab, B,\Omega) = \inf \sigma(P_\pm (B,h)) 
\end{equation} 
and skip the reference to $\Ab$ when not needed. The smallest eigenvalue 
of $P(B,h)$ is given by
\begin{equation} \label{Lambda}
\Lambda^D(h, \Ab, B, \Omega) = \min\{  \lambda_{-}^D(h,\Ab, B,\Omega) , \,   \lambda_{+}^D(h,\Ab, B,\Omega) \}\,.
\end{equation}
The Pauli operator  with Dirichlet boundary conditions is non-negative (this follows from an integration by parts, 
or from the view-point that the Pauli operator is the square of a Dirac operator) 
and, as a consequence, the bottom of the spectrum is non-negative,
\[
\lambda_{\pm}^D(h,\Ab, B,\Omega) \geq 0\,.
\]
Moreover, if $\Gamma$ is defined by $\Gamma u = \bar u\,$, then
\[
P_+(B,h)\Gamma  = \Gamma P_-(-B,h)\,.
\]
It  immediately follows that
\begin{equation} \label{lambda-pm}
\lambda^D_{+}(h,\Ab, B,\Omega)=\lambda^D_{-}(h,-\Ab, -B,\Omega). 
\end{equation}
Hence, to understand the properties of $\Lambda^D(h,\Ab, B,\Omega)$ it suffices
to study $\lambda_{-}^D(h,\Ab, B,\Omega)$, and we will mostly do so.  

The behaviour of $\Lambda^D(h,\Ab, B,\Omega)$  is important  
for the evolution properties of the heat equation $\partial_t u + P(B,h)\, u =0$ 
as it gives the rate of convergence of its solutions to the stationary solution $u=0$. 
In particular $\Lambda^D(h,\Ab, B,\Omega)$ determines the rate of the exponential decay
of the heat semi-group generated by the Pauli operator $P(B,h)$.

Let us now specify the amount of regularity we assume about our domain~$\Omega$ and magnetic field~$B$. 
To do so, we introduce the notation $C^{p,+}$ to mean the H\"older class 
$C^{p,\alpha}$, for some unspecified $\alpha>0\,$.

\begin{assumption}\label{assumptions}
The boundary of $\Omega$ is continuous and piecewise in the H\"older 
class $C^{2,+}$. We allow the boundary to have at most a finite number of convex corners, 
each with aperture less than~$\pi$. The magnetic field $B$ is assumed to be in class 
$C^{0,+}(\overline{\Omega})$.
\end{assumption}

For later reference, we introduce the notation $C^{p,+}_{\mathrm{pw}}$ for
the piecewise $C^{p,+}$ condition from the assumption.  From now on 
we will always work under Assumption \ref{assumptions}.

%%%%%%%%%%%%%%%%%%%%%%%%%%%%%%%%%%%%%%%%%%%%%%%%%%%%%%%

\subsection{The state of art}
The following result was obtained in~\cite{EKP,HPS1}. 

\begin{theorem}[{\cite{EKP,HPS1}}]
\label{thmEKPNSC}
Let $\Omega$ be a connected domain in $\mathbb R^2$.  
If $B$ does not vanish identically in $\Omega$ there exists $\epsilon >0$ such 
that, for all~$h >0$ and for all~$\Ab$ such that $\curl \Ab=B$,
\begin{equation}\label{ineq1}
 \lambda_{-}^D(h,\Ab, B,\Omega) \geq   \lambda^D( \Omega) \, h^2\,  \exp(-\epsilon/h) \,.
 \end{equation} 
Here, $\lambda^D(\Omega)$ denotes the ground state energy of the Dirichlet 
Laplacian in $\Omega$.
 \end{theorem}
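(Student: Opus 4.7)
The plan is to exploit the factorisation of the spin-down Pauli operator $P_-(B,h)$ into a product of a first-order operator and its adjoint, and to absorb the magnetic field into a weight by a suitable gauge transformation, thereby reducing the Rayleigh quotient to that of a weighted Dirichlet Laplacian. First, since $B\in C^0(\overline\Omega)$ is bounded and $\Omega$ is bounded, let $\phi$ solve $\Delta\phi=B$ in $\Omega$ with $\phi=0$ on $\partial\Omega$; elliptic regularity together with Assumption~\ref{assumptions} ensures $\phi\in C^{1,+}(\overline\Omega)$, so $M:=\|\phi\|_{L^\infty(\Omega)}<\infty$, and $M>0$ because $B\not\equiv 0$. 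Setting $\Ab_0:=(-\partial_{x_2}\phi,\partial_{x_1}\phi)$ yields a potential with $\curl\Ab_0=B$. In the simply connected case any $\Ab$ with $\curl\Ab=B$ differs from $\Ab_0$ by a gradient, so $\lambda_-^D(h,\Ab,B,\Omega)$ is gauge-invariant and I restrict to the gauge $\Ab_0$.

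With complex coordinates $z=x_1+\iz x_2$ and $X_j:=hD_{x_j}-A_{0,j}$, a short computation gives $[X_1,X_2]=\iz hB$, hence for $T:=X_1+\iz X_2$,
\[
T^*T = X_1^2+X_2^2+\iz[X_1,X_2] = (hD-\Ab_0)^2 - hB = P_-(B,h),
\]
and in the specific gauge $\Ab_0$ the integrating-factor identity
\[
T = -2\iz h\,\e^{-\phi/h}\,\partial_{\bar z}\,\e^{\phi/h}
\]
holds. For $u\in H^1_0(\Omega)$, the substitution $u=\e^{-\phi/h}v$ is a bicontinuous bijection of $H^1_0(\Omega)$ onto itself (since $\e^{\pm\phi/h}\in C^{1,+}(\overline\Omega)$ is bounded away from $0$ and $\infty$), and produces $Tu=-2\iz h\,\e^{-\phi/h}\,\partial_{\bar z}v$. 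The variational characterisation then reads
\[
\lambda_-^D(h,\Ab_0,B,\Omega) \,=\, \inf_{v\in H^1_0(\Omega)\setminus\{0\}}\frac{4h^2\int_\Omega \e^{-2\phi/h}|\partial_{\bar z}v|^2\,\dd x}{\int_\Omega \e^{-2\phi/h}|v|^2\,\dd x}.
\]

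To conclude, I bound the weights pointwise by $\e^{-2M/h}\leq \e^{-2\phi/h}\leq \e^{2M/h}$ and invoke the identity $4\int_\Omega|\partial_{\bar z}v|^2\,\dd x=\int_\Omega|\nabla v|^2\,\dd x$, valid on $H^1_0(\Omega)$ because the cross term $\textup{Im}(\overline{\partial_{x_1}v}\,\partial_{x_2}v)$ integrates to zero by a single integration by parts. Combined, these give
\[
\lambda_-^D(h,\Ab_0,B,\Omega)\,\geq\, h^2\,\e^{-4M/h}\,\lambda^D(\Omega),
\]
which is the theorem with $\epsilon=4M$.

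The step I expect to be the main obstacle is the treatment of a non-simply connected $\Omega$: then $\Ab-\Ab_0$ is closed but may carry nontrivial periods, and the would-be gauge phase $\e^{\iz\chi/h}$ is only locally defined. I would handle this by cutting $\Omega$ along finitely many arcs to render it simply connected, performing the same factorisation on the cut domain, and absorbing the unitary jumps across the cuts into the boundary conditions; since the preceding estimate depends only on $\|\phi\|_\infty$ and $\lambda^D(\Omega)$, the lower bound survives. A deeper obstacle, orthogonal to the proof of Theorem~\ref{thmEKPNSC} itself, is that the constant $\epsilon=4\|\phi\|_\infty$ produced here is generically far from optimal when $B$ changes sign; identifying the sharp decay rate is precisely the matter pursued in the body of the paper.
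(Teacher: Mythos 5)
Your simply connected argument is sound and is, in substance, the proof of \cite{EKP} as summarised in the paper: factor $P_-(B,h)=T^*T$, write $T$ in the gauge $\Ab_0=\nabla^\perp\psi_0$ as $-2\iz h\,\e^{-\psi_0/h}\partial_{\bar z}\e^{\psi_0/h}$, substitute $u=\e^{-\psi_0/h}v$, and bound the weighted Dirichlet quotient. Two small remarks on that part. First, you can tighten the constant at no cost: separating $\inf_\Omega\e^{-2\psi_0/h}=\e^{-2\sup\psi_0/h}$ and $\sup_\Omega\e^{-2\psi_0/h}=\e^{-2\inf\psi_0/h}$ gives $\epsilon=2\Osc_\Omega\psi_0$ rather than $4\|\psi_0\|_\infty$, which is exactly the constant of Theorem~\ref{th1.5}; both are admissible for the present statement, but the tighter one is the natural one. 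Second, the claim $\psi_0\in C^{1,+}(\overline\Omega)$ is slightly overstated at the corners; what you actually need and actually have (from $B\in L^\infty$, aperture $<\pi$, hence $\psi_0\in W^{2,p}$ for some $p>2$) is $\psi_0\in W^{1,\infty}(\Omega)\cap L^\infty(\Omega)$, which is enough for the $H^1_0$-to-$H^1_0$ bijection.

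The non-simply connected sketch, however, has a genuine gap. After cutting $\Omega$ along arcs to obtain a simply connected $\Omega'$, you set $v=\e^{(\psi_0+\iz\chi)/h}u$ with $\chi$ a single-valued primitive of $\Ab-\Ab_0$ on $\Omega'$. But $\chi$ jumps across the cuts, so $v$ is discontinuous there and in particular does \emph{not} belong to $H^1_0(\Omega')$; you therefore cannot invoke $\lambda^D(\Omega')$ (nor $\lambda^D(\Omega)$, which involves a different domain) to close the estimate. What you would obtain is the ground state energy of a Dirichlet Laplacian acting on sections of a flat line bundle over $\Omega$, not the scalar $\lambda^D(\Omega)$, and the phrase ``absorbing the unitary jumps into the boundary conditions'' glosses over precisely this. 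The route the paper takes, and the one that works cleanly, is different: extend $\Ab$ (hence $B$) from $\Omega$ to the simply connected envelope $\widetilde\Omega$, observe that extension by zero embeds $H^1_0(\Omega)$ into $H^1_0(\widetilde\Omega)$, so that $\lambda_-^D(h,\Ab,B,\Omega)\ge\lambda_-^D(h,\widetilde\Ab,\widetilde B,\widetilde\Omega)$, and then apply the simply connected result on $\widetilde\Omega$. You should replace your cutting argument with this domain-monotonicity step.
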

In \cite{EKP}, the statement in the theorem is proved under the assumption that~$\Omega$ is 
simply connected.  The generalization to non-simply connected domains, given in \cite{HPS1}, 
was relatively straightforward using domain monotonicity of the ground state energy in the 
case of the Dirichlet problem,
\[
\lambda_{-}^D(h,\Ab, B,\Omega) \geq  \lambda_{-}^D(h,\widetilde B,\widetilde \Omega) \,,
\]
as soon as extensions of $\Ab$ and $B$ to the simply connected envelope $\widetilde\Omega$ of
$\Omega$ were constructed. 

The proof in~\cite{EKP}  gives a way of computing a lower bound for 
$\epsilon$, by considering the oscillation of the scalar potential $\psi$, i.e.~of any solution of 
$\Delta \psi =B$, and optimising over $\psi$. 
For future reference, we introduce a specific choice of the scalar
potential by letting the function $\psi_0$ be a solution of
\begin{equation}\label{defpsi0}
\Delta \psi_0 = B \text{ in } \Omega\,,\quad \psi_0 =0 \text{ on } \partial\Omega.
\end{equation}
 The regularity conditions   on $\Omega$ and $B$ stated in Assumption~\ref{assumptions} guarantee the local regularity of $\psi_0$ by Schauder estimates:  $\psi_0\in C^{2,+} (\Omega)$ and a control of the regularity at the boundary   (see\footnote{The condition reads: $ 2 -\frac{2}{p} < \frac{\pi}{\omega}$ where $\omega$ is the maximal aperture of the corners.} \cite{D}) which can be deduced from: $\psi_0\in W^{2,p}(\Omega)$ for  
some $p>2$. Hence, by Sobolev's embedding, 
$\psi_0$ belongs to  $C^{1,+} (\overline{\Omega}) \cap  C^{2,+} (\Omega) $.

\smallskip

\noindent With the choice \eqref{defpsi0} proposed for simply connected domains in~\cite{HPS},  
we have

\begin{theorem}[{\cite{HPS}}] \label{th1.3}
Let $\Omega$ be a simply connected domain in $\mathbb R^2$.
If $B >0$ in $\Omega$, and if $\psi_0$ satisfies~\eqref{defpsi0}, then, 
for any $h>0$, 
\[
 \lambda_{-}^D(h, B,\Omega) \geq   \lambda^D(\Omega) \, h^2\,  \exp \bigl(2 \inf_\Omega \psi_0 /h\bigr) \,.
\]
In the semi-classical limit,
\[
\lim_{h\rightarrow 0} h \log  \lambda_{-}^D(h,B,\Omega) = 2 \inf_\Omega \psi_0\,.
\]
\end{theorem}

\noindent Theorem \ref{th1.3} shows that $\lambda_{-}^D(h,B,\Omega)$ is exponentially  small as the semi-classical parameter $h>0$ tends to zero.

In the non-simply connected case, effects from the circulations of the magnetic
potential along different components of the boundary could in principle 
introduce a different constant than $\inf_\Omega\psi_0$ inside the exponential 
function. 
Thus, a modified scalar potential, taking the circulation along the holes into 
account, was used in \cite{HPS1}. It turns out, however, that in the 
semi-classical limit, such effects disappear, and it is again $\psi_0$ that
gives the correct asymptotic.

\begin{theorem}[{\cite{HPS1}}] \label{th1.3nsc}
Assume that $\Omega$ is a connected domain in $\mathbb R^2$, and that $B >0$ in $\Omega$. 
If $\psi_0$ is the solution of \eqref{defpsi0}, 
then, for any $\Ab$ such that $\curl \Ab = B$,  
\[
\lim_{h\rightarrow 0} h \log  \lambda_{-}^D(h,B, \Omega) =  2 \inf_\Omega \psi_0\,.
\]
\end{theorem}

\smallskip

\noindent  If $B$ changes sign, then the lower bound on $\lambda_{-}^D(h,B, \Omega)$ depends on the oscillation of $\psi_0$ in $\Omega$.  More precisely, we have

\begin{theorem}[{\cite{EKP}}]\label{th1.5}
Assume that $\Omega$ is a simply connected domain in $\mathbb R^2$ and
that $\psi_0$ satisfies~\eqref{defpsi0}. Then
\begin{equation} \label{ekp-lowerb}
\lambda_{-}^D(h,\Ab, B,\Omega) \geq   \lambda^D( \Omega) \, h^2\,  \exp  \bigl(- 2 \Osc_\Omega \psi_0/h\bigr) \, ,
\end{equation}
where 
\begin{equation}
 \Osc_\Omega \psi_0 = \sup_{\Omega} \psi_0 -\inf_\Omega \psi_0. 
\end{equation} 
\end{theorem}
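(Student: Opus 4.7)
The plan is to factorise $P_-(B,h)$ as $L^{*}L$ for a first-order operator $L$ adapted to the scalar potential $\psi_0$, and then to perform a Schr\"odinger-type conjugation that turns $L$ into a weighted Cauchy--Riemann operator. Since $\Omega$ is simply connected, the spectrum of $P_-(B,h)$ is gauge invariant, so I would first fix the specific gauge $\Ab = (-\partial_{x_2}\psi_0,\,\partial_{x_1}\psi_0)$, which satisfies $\curl\Ab = \Delta\psi_0 = B$. Writing $\pi_j = hD_{x_j} - A_j$ and $L = \pi_1 + i\pi_2$, a direct commutator computation (using $[\pi_1,\pi_2]=ihB$) gives
\[
L^{*}L = \pi_1^2+\pi_2^2+i[\pi_1,\pi_2] = \pi_1^2+\pi_2^2 - hB = P_-(B,h),
\]
so $\langle P_- u,u\rangle = \|Lu\|^2$ for $u\in H_0^1(\Omega;\C)$. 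With this gauge one checks that
\[
Lu = -2i\bigl(h\,\bar\partial u + (\bar\partial\psi_0)\,u\bigr),\qquad \bar\partial := \tfrac12(\partial_{x_1}+i\partial_{x_2}).
\]

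The key step is then the substitution $u = e^{-\psi_0/h} v$. Under Assumption~\ref{assumptions}, $\psi_0$ is Lipschitz on $\overline{\Omega}$ and vanishes on $\partial\Omega$, so $u\leftrightarrow v$ is a bijection of $H_0^1(\Omega;\C)$ preserving the Dirichlet condition. The two $\bar\partial\psi_0$ contributions cancel,
\[
h\,\bar\partial u + (\bar\partial\psi_0)\,u = h\,e^{-\psi_0/h}\,\bar\partial v,
\]
so that $Lu = -2ih\,e^{-\psi_0/h}\bar\partial v$ and hence
\[
\langle P_- u,u\rangle = 4h^2\!\int_\Omega e^{-2\psi_0/h}|\bar\partial v|^2\,dx,\qquad \|u\|^2=\!\int_\Omega e^{-2\psi_0/h}|v|^2\,dx.
\]
Bounding the weight from below by $e^{-2\sup_\Omega\psi_0/h}$ in the numerator and from above by $e^{-2\inf_\Omega\psi_0/h}$ in the denominator extracts precisely the factor $\exp(-2\Osc_\Omega\psi_0/h)$.

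To close the argument I would combine two standard Dirichlet facts: the identity $4\int_\Omega|\bar\partial v|^2\,dx = \int_\Omega|\nabla v|^2\,dx$ for $v\in H_0^1(\Omega;\C)$ (the cross terms integrate to zero by equality of mixed partials) and the Rayleigh bound $\int_\Omega|\nabla v|^2\,dx\geq \lambda^D(\Omega)\int_\Omega|v|^2\,dx$. Together these yield
\[
\frac{\langle P_- u,u\rangle}{\|u\|^2}\geq h^2\,\lambda^D(\Omega)\,e^{-2\Osc_\Omega\psi_0/h}
\]
for every nonzero $u\in H_0^1(\Omega;\C)$, which is the claim. The main obstacle is conceptual rather than computational: recognising the correct gauge together with the correct conjugation. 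Both the existence of a single-valued $\Ab$ (simple connectedness) and the compatibility of $u\mapsto v$ with the Dirichlet condition (ensured by $\psi_0|_{\partial\Omega}=0$) are essential, and it is precisely here that the hypotheses of the theorem come into play.
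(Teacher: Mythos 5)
Your argument is correct and is essentially the same factorisation/conjugation argument that underlies Theorem~\ref{th1.5} in \cite{EKP} and is reused in this paper: the identity $\langle P_-u,u\rangle = \|Lu\|^2 = 4h^2\int_\Omega e^{-2\psi_0/h}|\bar\partial v|^2\,dx$ with $u=e^{-\psi_0/h}v$ is exactly \eqref{eq:pr1}, the bound on the weight in the Rayleigh quotient gives the oscillation, and the identity $4\int|\bar\partial v|^2=\int|\nabla v|^2$ on $H_0^1$ combined with the Dirichlet Rayleigh bound finishes it. No gap; this is the intended proof.
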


This paper will not improve any result concerning lower bounds and will be devoted to improving the existing upper-bounds.
 
\noindent

%%%%%%%%%%%%%%%%%%%%%%%%%%%%%%%%%%%%%%%
\section{Main results}
\label{s2}
The aim of this paper is to extend the above mentioned results to Pauli operators with sign changing magnetic fields. 
Given a magnetic field~$B$, we introduce the sets $\Omega_B^+$ and $\Omega_B^-$
as the subsets of $\Omega$ where the magnetic field $B$ is positive and negative,
respectively,
\begin{equation}
\label{eq:Omegaplusminus}
\Omega_B^+=\{ x \in \Omega : B(x) >0\}
\quad\text{and}\quad
\Omega_B^-=\{ x \in \Omega : B(x) <0\}.
\end{equation}

\subsection{The ground state energy of $P_{\pm}(B,h)$} 
It turns out that the scalar potential $\psi_0$, 
the solution of~\eqref{defpsi0}, still plays a main role for the asymptotic
of the bottom of the spectrum of the Pauli operator.
If $B>0$ then, by the maximum principle $\psi_0<0$ in $\Omega$, and similarly, if $B<0$ then
$\psi_0>0$ in $\Omega$. For $B$ with varying sign, it might still be the case 
that $\psi_0$ is of constant sign in $\Omega$, but that will depend on $B$, and
the situation is delicate (we study some examples in the Sections~\ref{s-radial} and~\ref{s6}). 

With \eqref{lambda-pm} in mind we focus again on the eigenvalue $\lambda_{-}^D(h,B, \Omega)$. 
Our first result concerns the case when $\psi_0$ attains negative values in $\Omega$.

\begin{theorem} \label{th1.3ga}
Assume that $\Omega$ is a simply connected domain in $\mathbb R^2$, and that 
$\inf_\Omega \psi_0 < 0$, where $\psi_0$ is satisfying~\eqref{defpsi0}. Then 
\begin{equation} \label{eq:proofof1.4}
\limsup_{h\rightarrow 0} h \log  \lambda_{-}^D(h,B, \Omega) \leq   2 \inf_\Omega \psi_0\,.
\end{equation}
\end{theorem}

\begin{remark}
It is a natural question to discuss, in the case when the magnetic field $B$ changes sign,  under which condition on $B$ the assumption $\inf_\Omega \psi_0 < 0$ holds. 
Some results in this direction but for a discontinuous magnetic field with two values of opposite sign are obtained in~\cite{vdBBu}.
\end{remark}

We recall from~\eqref{eq:Omegaplusminus} the definitions of $\Omega_B^+$ and
$\Omega_B^-$, and will now turn to the case when both of them are non-void.

We assume in addition that $\Gamma:=B^{-1} (0)\subset \overline{\Omega}$ is of class $C^{2,+}$  and that $\Gamma \cap \partial \Omega$ is either empty or, if non empty, that the intersection is a finite set,  avoiding the corner points, with transversal intersection.
 
Under this assumption $\Omega^\pm_B$ satisfies the same condition as $\Omega$ 
from Assumption~\ref{assumptions}, and we will denote by $\hat \psi_0$ the solution of 
\begin{equation}\label{defhatpsi0}
\Delta \hat \psi_0 = B \text{ in }  \Omega_B^+,\quad \hat  \psi_0 =0 \text{ on }\partial\Omega_B^+\,.
\end{equation}
By domain monotonicity, with $ \Omega_B^+  \subset \Omega\,$, 
we can  apply Theorem~\ref{th1.3ga}  with $\Omega$ replaced by $\Omega_B^+$ and get

\begin{corollary}\label{th1.4}
Assume that $\Omega$ is a connected domain in $\mathbb R^2$. 
Assume further that  $\hat\psi_0$ satisfies~\eqref{defhatpsi0} and that $ \Omega_B^+$ is non-empty and simply connected.  
Then
\begin{equation} \label{hps-upperb}
\limsup_{h\rightarrow 0} h \log  \lambda_{-}^D(h,B,\Omega) \leq 2 \inf_{\Omega_B^+} \hat \psi_0\,.
\end{equation}
\end{corollary}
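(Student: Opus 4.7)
The plan is to follow the two-step scheme outlined by the authors in the paragraph preceding the statement: first invoke domain monotonicity to pass from $\Omega$ to the smaller domain $\Omega_B^+$, and then apply Theorem~\ref{th1.3ga} on $\Omega_B^+$, where $B>0$ forces the scalar potential to be strictly negative.

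For the monotonicity step I would fix a vector potential $\Ab$ on $\Omega$ with $\curl\Ab=B$, write the quadratic form of $P_-(B,h)$ as
\[
q_-^\Omega(u)=\int_\Omega \bigl|(hD-\Ab)u\bigr|^2\,\dd x-h\int_\Omega B\,|u|^2\,\dd x,
\]
and note that any $u\in H^1_0(\Omega_B^+)$ extends by zero to an element $\tilde u\in H^1_0(\Omega)$ with the same $L^2$ norm, on which $q_-^\Omega(\tilde u)=q_-^{\Omega_B^+}(u)$. The min-max principle then yields
\[
\lambda_-^D(h,\Ab,B,\Omega)\leq \lambda_-^D(h,\Ab|_{\Omega_B^+},B,\Omega_B^+).
\]

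In the second step, the regularity assumption on $\Gamma$ ensures that $\Omega_B^+$ itself meets the conditions of Assumption~\ref{assumptions}. Since $B>0$ strictly on $\Omega_B^+$, the strong maximum principle applied to~\eqref{defhatpsi0} gives $\hat\psi_0<0$ throughout $\Omega_B^+$, so in particular $\inf_{\Omega_B^+}\hat\psi_0<0$ and Theorem~\ref{th1.3ga} becomes applicable on $\Omega_B^+$. This provides
\[
\limsup_{h\to 0}h\log \lambda_-^D(h,\Ab|_{\Omega_B^+},B,\Omega_B^+)\leq 2\inf_{\Omega_B^+}\hat\psi_0,
\]
and chaining this with Step 1 produces~\eqref{hps-upperb}.

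The only subtle point I anticipate is that $\Omega_B^+$ need not be simply connected (it will typically be an annular region whenever $\Omega_B^-$ is compactly contained in $\Omega$), whereas Theorem~\ref{th1.3ga} is stated only for simply connected domains. The remedy exploits that $B>0$ on each connected component of $\Omega_B^+$: one may either further shrink the domain, by a second monotonicity, to a simply connected subdomain containing an approximate minimiser of $\hat\psi_0$ on which $\hat\psi_0$ is still arbitrarily close to $\inf_{\Omega_B^+}\hat\psi_0$, or one may directly invoke Theorem~\ref{th1.3nsc}, which covers the connected but not necessarily simply connected case under the sign condition $B>0$, on each component and take the minimum. Both routes recover the claimed bound.
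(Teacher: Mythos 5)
Your argument follows the paper's own two-step scheme --- domain monotonicity of the Dirichlet form to pass from $\Omega$ to $\Omega_B^+$, then the upper bound on $\Omega_B^+$, where $B>0$ forces $\hat\psi_0<0$ so that the hypothesis $\inf\hat\psi_0<0$ is automatic --- and you have correctly spotted that $\Omega_B^+$ need not be simply connected (indeed it is typically an annulus when $B^{-1}(0)$ is a closed curve in $\Omega$), a point the paper's one-line justification glosses over. The fix via Theorem~\ref{th1.3nsc} is the right one: apply it to a connected component of $\Omega_B^+$ on which $\hat\psi_0$ attains its infimum, and chain with domain monotonicity of that component inside $\Omega$.

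Your first proposed remedy, however, does not work, and the reason is instructive. If you shrink to a strictly smaller simply connected $\omega\subset\Omega_B^+$, the function to which Theorem~\ref{th1.3ga} applies on $\omega$ is the solution $\psi_\omega$ of $\Delta\psi_\omega=B$ in $\omega$ with $\psi_\omega=0$ on $\partial\omega$, not the restriction $\hat\psi_0|_\omega$. Since $\hat\psi_0\leq 0=\psi_\omega$ on $\partial\omega$ and $\psi_\omega-\hat\psi_0$ is harmonic in $\omega$, the maximum principle gives $\psi_\omega\geq\hat\psi_0$ throughout $\omega$, hence $\inf_\omega\psi_\omega\geq\inf_{\Omega_B^+}\hat\psi_0$: shrinking only \emph{weakens} the bound, and there is no control over the gap. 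In fact, when $\Omega_B^+$ is an annulus, every simply connected $\omega\subset\Omega_B^+$ must omit a crosscut --- a set of positive capacity whose Dirichlet condition survives all limits --- so $\inf_\omega\psi_\omega$ stays bounded strictly away from $\inf_{\Omega_B^+}\hat\psi_0$ however close $\omega$ comes to filling $\Omega_B^+$. Only the route through Theorem~\ref{th1.3nsc} closes the argument.
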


 \begin{remark}\label{remcor} 
 Note that the assumption that $\Omega^\pm_B$ is simply connected is not necessary if we use the results of \cite{HPS1}. An assumption of connectedness is also not necessary. In this case, we choose the
  connected component giving the best result.
  \end{remark}

Now, the main problem is to determine if one of the bounds above, i.e.~\eqref{eq:proofof1.4}
and~\eqref{hps-upperb}, is optimal. 
We have two possibly enlightening statements on this question. 
The first one gives a simple criterion under which the upper bound given in 
Corollary~\ref{th1.4} is not optimal.
 
\begin{theorem}\label{th1.6}
Assume that $\Omega$ and $ \Omega_B^+$ are simply connected domain in $\mathbb R^2$, 
that $\Omega_B^+\neq\varnothing$, and that $\hat\psi_0$ satisfies~\eqref{defhatpsi0}.
If $B^{-1} (0)$ either is a compact $C^{2,+}$ closed curve in $\Omega$ or 
a $C^{2,+}$  line crossing $\partial \Omega$ transversally away from the corners, 
then
\[
\limsup_{h\rightarrow 0} h \log  \lambda_{-}^D(h,B,\Omega) <  2 \inf_{\Omega_B^+} \hat \psi_0\,.
\]
\end{theorem}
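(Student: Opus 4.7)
The plan is to improve the upper bound of Corollary~\ref{th1.4} by applying Theorem~\ref{th1.3ga} not to $\Omega_B^+$ but to a slightly larger subdomain $\Omega_t \subset \Omega$ obtained by thickening $\Omega_B^+$ across the nodal curve $\Gamma = B^{-1}(0)$. Let $\nu$ denote the unit normal to $\Gamma$ pointing into $\Omega_B^-$ and set, for small $t>0$,
\[
\Omega_t = \Omega_B^+ \cup \{x + s\nu(x) : x\in \Gamma\cap\Omega,\ 0\le s < t\},
\]
truncated near $\partial\Omega$ (in the line-crossing case) so that $\Omega_t\subset\Omega$ and $\Omega_t$ satisfies Assumption~\ref{assumptions}. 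Let $\tilde\psi_t$ be the Dirichlet solution of $\Delta \tilde\psi_t = B$ in $\Omega_t$, so $\tilde\psi_0 = \hat\psi_0$.

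The crucial step is to show that $\inf_{\Omega_t}\tilde\psi_t < \inf_{\Omega_B^+}\hat\psi_0$ for every sufficiently small $t>0$. A Hadamard variational argument identifies the restriction to $\Omega_B^+$ of $w:=\partial_t\tilde\psi_t|_{t=0}$ as the harmonic function vanishing on $\partial\Omega\cap\partial\Omega_B^+$ and equal to $-\partial_\nu\hat\psi_0$ on $\Gamma$. Since $\hat\psi_0$ is subharmonic in $\Omega_B^+$ and attains its maximum $0$ on $\partial\Omega_B^+$, the Hopf boundary point lemma gives $\partial_\nu\hat\psi_0 > 0$ on $\Gamma$; hence the boundary data of $w$ are non-positive and strictly negative along $\Gamma$, and the strong maximum principle forces $w<0$ strictly in the interior of $\Omega_B^+$. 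At any minimizer $x_\star\in\Omega_B^+$ of $\hat\psi_0$ (which lies in the interior as $\hat\psi_0\equiv 0$ on $\partial\Omega_B^+$ but $\hat\psi_0\not\equiv 0$), the Taylor expansion $\tilde\psi_t(x_\star) = \hat\psi_0(x_\star) + t\,w(x_\star) + O(t^2)$ yields
\[
\inf_{\Omega_t}\tilde\psi_t \le \tilde\psi_t(x_\star) < \hat\psi_0(x_\star) = \inf_{\Omega_B^+}\hat\psi_0 < 0.
\]

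Applying Theorem~\ref{th1.3ga} to $\Omega_t$ (whose hypothesis is now met) then gives $\limsup_{h\to 0} h\log\lambda_-^D(h,B,\Omega_t) \le 2\inf_{\Omega_t}\tilde\psi_t < 2\inf_{\Omega_B^+}\hat\psi_0$. Since any Dirichlet trial function on $\Omega_t$ extends by zero to a trial function on $\Omega$ with the same Rayleigh quotient, one has $\lambda_-^D(h,B,\Omega)\le\lambda_-^D(h,B,\Omega_t)$, and the desired strict inequality follows.

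The main obstacle I foresee is topological: Theorem~\ref{th1.3ga} is stated for simply connected domains, yet $\Omega_t$ inherits the topology of $\Omega_B^+$, which may be multiply connected (for example when $\Gamma$ is a closed interior curve and $\Omega_B^+$ is the annular component). In that configuration I would replace the uniform tubular enlargement by an asymmetric one, attaching a small topological disk sitting in $\Omega_B^-$ to $\Omega_B^+$ through a thin bridge crossing $\Gamma$ at a single interior point; the bridge is enough to render $\Omega_t$ simply connected, and a localized Hadamard--Hopf computation still produces $w(x_\star)<0$, preserving the strict improvement of the infimum. Verifying the piecewise $C^{2,+}$ regularity of $\partial\Omega_t$ at transversal intersections of $\Gamma$ with $\partial\Omega$ is a secondary but routine technical point, comfortably within Assumption~\ref{assumptions}.
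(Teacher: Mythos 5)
Your proposal hits the same two essential ingredients as the paper's proof: (i)~the Hopf boundary point lemma applied to the subharmonic function $\hat\psi_0$ at $\Gamma = B^{-1}(0)$ to obtain $\partial_\nu\hat\psi_0>0$, and (ii)~an outward deformation of $\Omega_B^+$ across $\Gamma$ that strictly decreases the infimum of the associated Dirichlet scalar potential, after which Theorem~\ref{th1.3ga} and domain monotonicity finish the argument. The implementation, however, differs. The paper (Proposition~\ref{mainprop}) deforms only in a small ball around a single regular boundary point $M_\omega$, using the flow of a compactly supported vector field, and then compares $\psi_{\widehat\omega}$ and $\psi_\omega$ directly: their difference is harmonic in $\omega$, vanishes on the unchanged part of $\partial\omega$, and is strictly negative on the pushed part (established via elliptic regularity and convergence in $C^{1,+}$ near $M_\omega$), so the strong maximum principle gives $\psi_{\widehat\omega}<\psi_\omega$ everywhere in $\omega$ --- hence a strict drop of the infimum without any differentiability-in-$t$ argument. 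You instead thicken the whole nodal curve uniformly and invoke the Hadamard shape derivative $w$; this is legitimate but imports more machinery (Fr\'echet differentiability of the Dirichlet problem in $t$, the first-variation formula, and uniform $C^0$ control needed to justify the Taylor expansion at $x_\star$), none of which is actually required. The paper's one-point push is also more robust at the transversal intersections $\Gamma\cap\partial\Omega$, where a global tubular thickening would need the truncation you allude to.

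On topology: your worry is well placed in principle, but note that a local one-point deformation \`a la Proposition~\ref{mainprop} preserves the topology of $\Omega_B^+$ just as your tubular thickening does, so if $\Omega_B^+$ were an annulus both approaches would face the same issue in invoking Theorem~\ref{th1.3ga} (as stated for simply connected domains) and in gauge-matching $\nabla^\perp\hat\psi_0$ with the restriction of $\nabla^\perp\psi_0$. The paper does not spell this out; in practice, in the configurations covered (line crossing the boundary, or a closed interior curve with $\Omega_B^+$ the enclosed disk --- the annulus case being reachable via $\Lambda^D(h,B,\Omega)=\Lambda^D(h,-B,\Omega)$), the enlarged domain is simply connected and the issue does not arise. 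Your ``bridge'' idea is a reasonable contingency but is not needed for the cases actually treated, and if invoked would require verifying the piecewise $C^{2,+}$ regularity and the sign of $w$ along the bridge boundary, neither of which is as routine as you suggest.
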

Two examples where this condition is satisfied is when $\Omega$ is a disk, 
and the magnetic field is either radial, vanishing on a circle, or affine, vanishing
on a line. We will return to them later.

We mentioned earlier that even though $B$ changes sign, it might happen that
the scalar potential $\psi_0$ does not. 
Our second statement says that in this case we actually have the optimal result.
 
\begin{theorem}\label{th1.7}
Suppose that $\Omega$ is a simply connected domain in $\mathbb R^2$. 
If $\psi_0<0$ in $\Omega$, where $\psi_0$ is the solution of~\eqref{defpsi0}, then
\[
\lim_{h\rightarrow 0} h \log  \lambda_{-}^D(h,B,\Omega) = 2 \inf_\Omega  \psi_0\,.
\]
\end{theorem}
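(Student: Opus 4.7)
The statement is an equality, so I would prove it by establishing matching upper and lower bounds, and the pleasant observation is that both bounds are already essentially available from the results recalled earlier in the paper; the only real content is a boundary condition argument that makes them line up.

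\textbf{Upper bound.} Since $\psi_0 < 0$ in $\Omega$ and $\Omega$ is non-empty, the hypothesis $\inf_\Omega \psi_0 < 0$ of Theorem~\ref{th1.3ga} is satisfied. Theorem~\ref{th1.3ga} then gives directly
\[
\limsup_{h \to 0} h \log \lambda_-^D(h, B, \Omega) \leq 2 \inf_\Omega \psi_0 \,.
\]

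\textbf{Lower bound.} This is where the sign hypothesis on $\psi_0$ is crucial. I would apply the Ekholm--Kova\v r\'ik--Portmann bound recalled in Theorem~\ref{th1.5}, which yields
\[
\lambda_-^D(h, B, \Omega) \geq \lambda^D(\Omega)\, h^2 \exp\bigl(-2 \Osc_\Omega \psi_0 / h\bigr).
\]
Taking $h \log$ of both sides and letting $h \to 0$ gives
\[
\liminf_{h \to 0} h \log \lambda_-^D(h, B, \Omega) \geq -2 \Osc_\Omega \psi_0 \,.
\]
The key step is to show that under the hypothesis $\psi_0 < 0$ in $\Omega$, one has $\Osc_\Omega \psi_0 = - \inf_\Omega \psi_0$. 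For this I would invoke the regularity of $\psi_0$ on $\overline\Omega$ stated after \eqref{defpsi0}: $\psi_0 \in C^0(\overline\Omega)$ with $\psi_0 = 0$ on $\partial\Omega$, so $\sup_{\overline\Omega}\psi_0 \geq 0$, while the strict inequality $\psi_0<0$ in $\Omega$ combined with continuity up to the boundary forces $\sup_\Omega \psi_0 = 0$. Therefore
\[
-2 \Osc_\Omega \psi_0 = -2(\sup_\Omega \psi_0 - \inf_\Omega \psi_0) = 2 \inf_\Omega \psi_0 \,,
\]
and the lower bound matches the upper bound exactly.

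\textbf{Main obstacle (or lack thereof).} There is really no analytic obstacle once one recognises that the $B > 0$ case covered by Theorem~\ref{th1.3} succeeds for exactly this reason: the maximum principle forces $\psi_0 \leq 0$ with boundary trace $0$, which collapses the oscillation bound of Theorem~\ref{th1.5} onto the infimum bound of Theorem~\ref{th1.3ga}. Thus Theorem~\ref{th1.7} should be viewed as saying that the proof strategy of the constant-sign case goes through unchanged whenever the sign condition survives at the level of the scalar potential $\psi_0$, even if $B$ itself changes sign. The only mild point to check carefully is that $\sup_\Omega \psi_0$ and $\sup_{\overline\Omega}\psi_0$ agree, which follows from the boundary regularity of $\psi_0$ stated in the preamble to Theorem~\ref{th1.5} and does not require the non-corner part of the boundary to be smooth beyond continuity.
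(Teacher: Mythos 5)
Your proof is correct and matches the paper's strategy in substance: the upper bound comes from the trial-state construction (you invoke Theorem~\ref{th1.3ga} as a black box, whereas the paper re-runs the trial state $u=\e^{-\psi_0/h}-\e^{\psi_0/h}$ from \cite{HPS} directly, exploiting $\psi_0\le 0$ to bound $\e^{2\psi_0/h}\le 1$); the lower bound comes from the Ekholm--Kova\v r\'ik--Portmann estimate of Theorem~\ref{th1.5}; and the two collapse because $\psi_0<0$ in $\Omega$ together with $\psi_0\in C^0(\overline\Omega)$ and $\psi_0|_{\partial\Omega}=0$ forces $\sup_\Omega\psi_0=0$, hence $\Osc_\Omega\psi_0=-\inf_\Omega\psi_0$. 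Your presentation is slightly more modular (and not circular, since Theorem~\ref{th1.3ga}'s proof via the cutoff trial state $v_\eta\e^{-\psi_0/h}$ does not rely on Theorem~\ref{th1.7}), but the underlying ingredients and the decisive observation are the same as in the paper.
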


\begin{remark}
Other bounds for excited states have been obtained in the case when $B$ has constant sign in \cite{BTRS}. These authors show that the lower bound in Theorem \ref{th1.3}
is not optimal (in terms of the prefactor) and provide another point of view which is for the moment only applied in the case of a positive magnetic field.
\end{remark}

%%%%%%%%%%%%%%%%%%%%%%%%%%%%%%%%%%%%%%%%%%%%%%%%%%%%%%% 
\subsection{The ground state energy of the Pauli operator}
 As already mentioned,  it follows from~\eqref{lambda-pm} that to 
understand the lowest eigenvalue
of each of the components of $P(B,h)$, it suffices to study one of them, with the
extra price that we must do it both for $B$ and $-B$. 
To discuss the lowest eigenvalue $\Lambda^D(h,B,\Omega)$ of the Pauli operator 
$P(B,h)$, we will compare the eigenvalues for the spin-up and spin-down components. 

If the scalar potential $\psi_0$ does not changes sign in $\Omega$, we can
transfer the earlier results to $\Lambda^D(h,B,\Omega)$.

\begin{theorem}\label{thm-Lambda1} 
Let $\Omega$ be a simply connected domain in $\mathbb R^2$, and let $\psi_0$ be 
given by \eqref{defpsi0}. 
If $\psi_0$ does not change sign in $\Omega$, then 
\begin{equation} \label{lim-Lambda} 
\lim_{h\rightarrow 0} h \log  \Lambda^D(h,B,\Omega) = - 2  \Osc_\Omega \psi_0. 
\end{equation} 
\end{theorem}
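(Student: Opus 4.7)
The plan is to combine the spin-flip symmetry~\eqref{lambda-pm} with the sharp asymptotic of Theorem~\ref{th1.7} and the universal lower bound of Theorem~\ref{th1.5}. Since $\Omega$ is simply connected, gauge invariance allows us to suppress $\Ab$, and~\eqref{lambda-pm} becomes
\begin{equation*}
\Lambda^D(h,B,\Omega) = \min\bigl\{\lambda_-^D(h,B,\Omega),\, \lambda_-^D(h,-B,\Omega)\bigr\}.
\end{equation*}
The scalar potential associated to $-B$ through~\eqref{defpsi0} is simply $-\psi_0$, and $\Osc_\Omega(-\psi_0)=\Osc_\Omega\psi_0$. Therefore, after exchanging $B$ with $-B$ if necessary, I may assume $\psi_0<0$ in $\Omega$; then $\sup_\Omega\psi_0=0$ and $\inf_\Omega\psi_0=-\Osc_\Omega\psi_0$.

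For the first term in the minimum, Theorem~\ref{th1.7} applies verbatim and yields
\begin{equation*}
\lim_{h\to 0} h\log\lambda_-^D(h,B,\Omega) = 2\inf_\Omega\psi_0 = -2\Osc_\Omega\psi_0.
\end{equation*}
For the second term, the scalar potential associated with $-B$ is $-\psi_0\geq 0$, and Theorem~\ref{th1.5} applied with $(B,\psi_0)$ replaced by $(-B,-\psi_0)$ gives
\begin{equation*}
\lambda_-^D(h,-B,\Omega) \geq \lambda^D(\Omega)\,h^2\,\exp\bigl(-2\Osc_\Omega\psi_0/h\bigr),
\end{equation*}
so $\liminf_{h\to 0} h\log\lambda_-^D(h,-B,\Omega) \geq -2\Osc_\Omega\psi_0$.

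The conclusion follows from the elementary identity $h\log\min(a,b)=\min(h\log a,h\log b)$ for positive $h,a,b$: the upper bound on $h\log\Lambda^D$ comes from the first display (which is an equality), while the lower bound is obtained by taking the minimum of the two lower bounds, both of which are bounded below by $-2\Osc_\Omega\psi_0$. Thus both $\limsup$ and $\liminf$ of $h\log\Lambda^D(h,B,\Omega)$ equal $-2\Osc_\Omega\psi_0$, which is~\eqref{lim-Lambda}.

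No substantial obstacle is anticipated here; the content of the theorem is essentially a packaging of Theorems~\ref{th1.5} and~\ref{th1.7} by means of the spin-flip symmetry. The only point requiring a small justification is the reduction to $\psi_0<0$ (rather than merely $\psi_0\leq 0$), which is harmless: if $B\equiv 0$ the statement is trivial, while otherwise the hypothesis that $\psi_0$ does not change sign together with $\psi_0=0$ on $\partial\Omega$ yields strict sign via the boundary-point/strong maximum principle applied at the boundary.
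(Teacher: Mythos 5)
Your argument is correct and takes essentially the same route as the paper: combine the universal lower bound from Theorem~\ref{th1.5} with the sharp asymptotic from Theorem~\ref{th1.7} via the spin-flip identity \eqref{lambda-pm}. The only difference is organizational; you reduce to $\psi_0<0$ by exchanging $B\leftrightarrow -B$, whereas the paper's proof records the two sub-cases $\psi_0<0$ and $\psi_0>0$ separately, and then invokes the lower bound \eqref{Lambda-lowerb}. These are the same argument.

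One caveat on your closing remark: the strong maximum principle does \emph{not} upgrade $\psi_0\leq 0$ to $\psi_0<0$ here, because when $B$ changes sign $\psi_0$ is neither subharmonic nor superharmonic. For instance, on the unit disc $\psi_0(x)=-(x_1^2+x_2^2)\bigl(1-x_1^2-x_2^2\bigr)$ satisfies $\psi_0\leq 0$ and $\psi_0=0$ on $\partial D(0,1)$, yet vanishes at the interior point $x=0$, and its Laplacian $B=\Delta\psi_0=16(x_1^2+x_2^2)-4$ is a genuine sign-changing field. So the implication you invoke is false in general. This does not endanger the theorem, because the proof of Theorem~\ref{th1.7} in Section~\ref{s3} relies only on condition~\eqref{eq3.1}, namely $\psi_{\mathrm{min}}<\psi_{\mathrm{max}}=0$, which still holds for $\psi_0\leq 0$, $\psi_0\not\equiv 0$; but one should cite that weaker hypothesis rather than claim strict negativity. (The paper's own proof of Theorem~\ref{thm-Lambda1} is silent on this same borderline case.)
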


\noindent Our final result concerns a case when $\psi_0$  could change sign.  Let
\[
\psi_{\mathrm{min}} :=\inf_\Omega\psi_0, \quad \text{and} \quad \psi_{\mathrm{max}} :=\sup_\Omega\psi_0.
\]

\begin{theorem} \label{thm-closed}
Let $\Omega$ be a simply connected domain in $\mathbb R^2$, and let $\psi_0$ be 
given by \eqref{defpsi0}. Assume that
\[
 \psi_{\mathrm{min}} \, < \, \psi_{\mathrm{max}}. 
\]
Assume further that $\psi_0^{-1} (\psi_{\mathrm{max}})$ contains a $C^{2,+}$ closed curve enclosing a non-empty part of $\psi_0^{-1} (\psi_{\mathrm{min}})$ or that $\psi_0^{-1} (\psi_{\mathrm{min}})$ contains a $C^{2,+}$ closed curve enclosing a non-empty part of $\psi_0^{-1} (\psi_{\mathrm{max}})$. Then 
\begin{equation} \label{lim-Lambda-1} 
\lim_{h\rightarrow 0} h \log  \Lambda^D(h,B,\Omega) = - 2  \Osc \psi_0. 
\end{equation} 
\end{theorem}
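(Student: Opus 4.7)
The plan is to sandwich $\Lambda^D(h,B,\Omega)$ between the lower bound of order $h^2 e^{-2\Osc_\Omega\psi_0/h}$ coming directly from Theorem~\ref{th1.5} and a matching exponential upper bound produced by an explicit trial function in the spin-down sector, whose support is confined to the region enclosed by the closed curve furnished by the hypothesis.

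For the lower bound I will apply Theorem~\ref{th1.5} both to $\lambda_-^D(h,\Ab,B,\Omega)$ (whose relevant scalar potential is $\psi_0$) and to $\lambda_+^D(h,\Ab,B,\Omega) = \lambda_-^D(h,-\Ab,-B,\Omega)$ (whose relevant scalar potential is $-\psi_0$). Since $\Osc_\Omega(-\psi_0) = \Osc_\Omega\psi_0$, this immediately gives $\Lambda^D(h,B,\Omega) \geq \lambda^D(\Omega)\, h^2\, e^{-2\Osc_\Omega\psi_0/h}$, so $\liminf_{h\to 0} h \log \Lambda^D \geq -2\Osc_\Omega\psi_0$.

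For the matching upper bound I would treat the first alternative (a closed $C^{2,+}$ curve $\gamma \subset \psi_0^{-1}(\psi_{\mathrm{max}})$ enclosing a point of $\psi_0^{-1}(\psi_{\mathrm{min}})$) directly, and reduce the second alternative to it by replacing $B$ with $-B$, which swaps $\psi_0 \leftrightarrow -\psi_0$ and $\lambda_-^D \leftrightarrow \lambda_+^D$. Let $D$ denote the bounded region enclosed by $\gamma$; since $\psi_0 = 0 < \psi_{\mathrm{max}}$ on $\partial\Omega$, one has $\overline{D} \subset \Omega$. I work in the gauge $\Ab = \nabla^\perp \psi_0$ (available because $\Omega$ is simply connected) and exploit the factorisation $P_- = Q^*Q$ with $Q := (hD_{x_1}-A_1) + i(hD_{x_2}-A_2)$, which satisfies the identity $Q(e^{-\psi_0/h}\chi) = -2ih\,e^{-\psi_0/h}\bar\partial\chi$ for smooth $\chi$. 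Fix small $\epsilon > 0$ and pick $\eta \in C^\infty(\R;[0,1])$ with $\eta \equiv 1$ on $(-\infty, \psi_{\mathrm{max}}-2\epsilon]$ and $\eta \equiv 0$ on $[\psi_{\mathrm{max}}-\epsilon,\infty)$; set $\chi := \eta(\psi_0)\mathbf{1}_D$ and $u := e^{-\psi_0/h}\chi$. Because $\psi_0 > \psi_{\mathrm{max}}-\epsilon$ on some neighbourhood of $\gamma$ inside $D$, the cutoff $\chi$ is smooth and vanishes near $\partial D$, so $u \in H^1_0(\Omega)$. The Rayleigh quotient reduces to
\[
\frac{\langle P_- u, u\rangle}{\|u\|^2} = \frac{h^2 \int_D |\nabla\chi|^2 e^{-2\psi_0/h}\,\dd x}{\int_D \chi^2 e^{-2\psi_0/h}\,\dd x},
\]
with numerator at most $Ch^2 e^{-2(\psi_{\mathrm{max}}-2\epsilon)/h}$ (since $\nabla\chi$ is supported where $\psi_{\mathrm{max}}-2\epsilon \leq \psi_0 \leq \psi_{\mathrm{max}}-\epsilon$) and denominator at least $c_\delta\, e^{-2(\psi_{\mathrm{min}}+\delta)/h}$ (by integrating $e^{-2\psi_0/h}$ over a positive-measure portion of $\{\psi_0 \leq \psi_{\mathrm{min}}+\delta\}\cap D$). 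Taking $h\log$, sending $h\to 0$ and then $\epsilon,\delta\to 0$ yields $\limsup_{h\to 0} h\log\lambda_-^D \leq -2\Osc_\Omega\psi_0$, hence the same bound for $\Lambda^D \leq \lambda_-^D$.

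The main delicate point is the construction of $\chi$: the closed-curve hypothesis is precisely what allows $\supp\nabla\chi$ to be placed near the maximum level $\{\psi_0 = \psi_{\mathrm{max}}\}$ inside $\Omega$ rather than near $\partial\Omega$, where the boundary value $\psi_0 = 0$ would degrade the numerator exponent from $\psi_{\mathrm{max}}$ to $0$ and yield only the weaker bound of Corollary~\ref{th1.4}. The $C^{2,+}$ regularity of $\gamma$ ensures that nearby level sets are well-behaved, but no finer geometric information enters the leading-order asymptotic.
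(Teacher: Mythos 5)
Your proof is correct and matches the paper's strategy in substance. The paper organises the upper bound more modularly: it restricts to the enclosed subdomain $\Omega_1$ by domain monotonicity, observes that the scalar potential there is $\psi_1=\psi_0-\psi_{\mathrm{max}}$ (with $\inf_{\Omega_1}\psi_1=\psi_{\mathrm{min}}-\psi_{\mathrm{max}}<0$), and quotes Theorem~\ref{th1.3ga}; whereas you unwind that chain and build the trial state $u=e^{-\psi_0/h}\eta(\psi_0)\mathbf{1}_D$ directly in $\Omega$. The two computations are the same once Theorem~\ref{th1.3ga}'s own trial-state argument is expanded — the cutoff placed in the band $\psi_{\mathrm{max}}-2\epsilon\leq\psi_0\leq\psi_{\mathrm{max}}-\epsilon$ near $\gamma$ is precisely the role of the cutoff $v_\eta$ near $\partial\Omega_1$ in \cite[Proposition~4.1]{HPS}, where $\psi_1=0$ means $\psi_0=\psi_{\mathrm{max}}$. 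Your handling of the second alternative via $B\mapsto-B$ and \eqref{lambda-pm}, and the lower bound via Theorem~\ref{th1.5} applied to both spin components, also coincide with the paper.
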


\begin{remark} 
A  typical example in which the conditions of Theorem \ref{thm-closed} are satisfied is a radial magnetic field which changes sign on a disc, see Section \ref{s-radial}. 
\end{remark}

%%%%%%%%%%%%%%%%%%%%%%%%%%%%%%%%%%%%%%%%%%%%%%%%%%%%%%% 

%%%%%%%%%%%%%%%%%%%%%%%%%%%%%%%%%%%%%%%%%%%%%%
\section{Proof of Theorems~\ref{th1.3ga},~\ref{th1.7},~\ref{thm-Lambda1}, and~\ref{thm-closed}}\label{s3}
We assume in this section that $\Omega$ is simply connected.  Using a gauge transform if necessary, we can assume without loss of generality  that the magnetic vector potential
$\Ab$ satisfies
\[
\Div \Ab =0 \text{ in } \Omega \quad \text{ and} \quad  \Ab\cdot \nu =0\text{ on } \partial \Omega\,.
\] 
In this case, the solution  $\psi_0$ of \eqref{defpsi0} satisfies:
$\Ab = \nabla^\perp \psi_0 = (-\partial_{x_2} \psi_0\,,\, \partial_{x_1} \psi_0)$.

\begin{proof}[Proof of Theorem~\ref{th1.3ga}]
 Following \cite{HPS} we choose a test function $u$ in the form 
\[
u = v \exp (-\psi_0/h),
\]
where $v_\eta\in C_0^\infty(\Omega)$. An elementary calculation, c.f.~\cite{EKP}, then shows that
\begin{equation} \label{ekp-ub}
\lambda_{-}^D(h,B, \Omega) \leq \frac{h^2 \int_\Omega e^{-2\psi_0/h}\, |(\partial_{x_1} + i\partial_{x_2}) v|^2\, dx}{\int_\Omega e^{-2\psi_0/h}\, |v|^2\, dx}\, .
\end{equation}
Now let $\eta$ be such that $0 < \eta< - \psi_{\rm min}$ and let $E_\eta\subset\Omega$ be a sufficiently small neighbourhood of $\partial\Omega$ chosen so that 
$\psi_0 > -\frac{\eta}{2}$ in $E_\eta$.
Put $\Omega_\eta= \Omega\setminus E_\eta $  and let $v= v_\eta$ be such that $v_\eta=1$ in $\Omega_\eta$. Then 
\[
\int_\Omega e^{-2\psi_0/h}\, |(\partial_{x_1} + i\partial_{x_2}) v_\eta|^2\, dx \leq C_\eta \, e^{\frac{\eta}{h}}
\]
holds true with some constant  $C_\eta$. To estimate the denominator in \eqref{ekp-ub}, let us denote by $x_0\in \Omega_\eta$ the point where $\inf_\Omega \psi_0$ is achieved. Then there exists a constant $c>0$, independent of $h$, and such that 
\[
\psi_0(x) \leq \psi_{\rm min} + c|x-x_0|^2 \qquad \forall\, x\in\Omega. 
\]
Hence for any $\eps >0$ small enough we have 
\[
\int_\Omega e^{-2\psi_0/h} |v_\eta|^2 \, dx \, \geq \, \int_{\Omega_\eta: |x-x_0| \leq \eps}e^{-2\psi_0/h}  \, dx\, \geq \, \pi \eps^2 e^{-\frac{c \eps^2}{h}}\, e^{-\frac{2\psi_{\rm min}}{h}}\, .
\]
Choosing $\eps=h$ and inserting the above estimates into \eqref{ekp-ub} we obtain 
\[
\limsup_{h\rightarrow 0} h \log  \lambda_{-}^D(h,B, \Omega) \leq 2  \psi_{\rm min} + \eta.
\]
Since $\eta$ can be chosen arbitrarily small, this proves the claim.

\end{proof}

 \begin{remark} The assumptions at the boundary can be weaker when  Theorem~\ref{th1.3ga} is applied to an open set $\omega \subset \Omega$ with less regularity. Suppose that we only have $\psi_\omega \in C^{0,+} (\overline{\omega})$, where $\psi_\omega$ is the solution in $H_0^1(\omega)$ of $\Delta \psi_\omega = B$. Looking at the proof of Theorem~\ref{th1.3ga}  we can construct
   an open set $\hat \omega _\eta $ with $C^{2,+}$-boundary such that $\supp v_\eta \subset \hat \omega_\eta \subset \overline{ \hat \omega_\eta } \subset \omega$ and $\psi_{/ \hat \omega_\eta} \in H^2( \hat \Omega_\eta)$. We can then apply the monotonicity argument between $\Omega$ and $\hat \omega_\eta $.
  \end{remark}
  
\begin{proof}[Proof of Theorem~\ref{th1.7}]
The upper bound follows from Theorem \ref{th1.3ga}. Since $\psi_{\mathrm{max}}=0$, the lower bound is a consequence of Theorem \ref{th1.5}.
\end{proof}

\noindent   Notice that \begin{equation}\label{eq3.1}
\psi_{\mathrm{min}} < \psi_{\mathrm{max}}=0\,.
\end{equation}
holds true whenever $B>0$ in $\Omega$ and that \eqref{eq3.1} implies that $\int_\Omega B(x)\, dx\,   \geq \, 0$.

\begin{proof}[Proof of Theorem~\ref{thm-Lambda1}]
We first note that, since $\Osc_\Omega \psi_0 = \Osc_\Omega(-\psi_0)$, 
Theorem \ref{th1.5} in combination with~\eqref{lambda-pm} gives 
\begin{equation} \label{Lambda-lowerb}
\liminf_{h\rightarrow 0} h \log  \lambda_{\pm}^D(h,B,\Omega)  \, \geq \,  - 2  \Osc_\Omega \psi_0\,.
\end{equation}

If $\psi_0<0$ in $\Omega$, the claim follows from Theorem~\ref{th1.7} and \eqref{Lambda-lowerb}. If $\psi_0>0$ in $\Omega$, then Theorem \ref{th1.7} together with equation \eqref{lambda-pm} implies that 
\begin{equation} 
\lim_{h\rightarrow 0} h \log  \lambda_{+}^D(h,B,\Omega) = 2 \inf_\Omega  (-\psi_0) =  - 2  \Osc_\Omega \psi_0\,. 
\end{equation}
The lower bound \eqref{Lambda-lowerb} then again completes the proof.
\end{proof}

\begin{proof}[Proof of Theorem~\ref{thm-closed}]
Suppose first that  $\psi_0^{-1} (\psi_{\mathrm{max}})$ contains a $C^{2,+}$ closed curve $\gamma_1$ enclosing a non-empty part of $\psi_0^{-1} (\psi_{\mathrm{min}})$. Let $\Omega_1\subset\Omega$ be the region enclosed by $\gamma_1$ and  define on $\Omega_1$ the function $\psi_1= \psi_0-\psi_{\mathrm{max}}$. Then $\Delta \psi_1= B$,  $ \inf_{\Omega_1}  \psi_1<0$, and $\psi_1 = 0$ on $\partial\Omega_1=\gamma_1$. The domain monotonicity and Theorem \ref{th1.3ga} thus imply that
\begin{align*}
\limsup_{h\rightarrow 0}\, h \log \lambda^D_{-} (h, B,\Omega) & \leq \limsup_{h\rightarrow 0}\, h \log \lambda^D_{-} (h, B, \Omega_1) \nonumber \\
& \leq \,  2 \inf_{\Omega_1} \psi_1 = - 2 \Osc_\Omega  \psi_0. 
\end{align*}
On the other hand, if $\psi_0^{-1} (\psi_{\mathrm{min}})$ contains a $C^{2,+}$ closed curve $\gamma_2$ enclosing a non-empty part of $\psi_0^{-1} (\psi_{\mathrm{max}})$, then we denote by $\Omega_2\subset\Omega$ the region enclosed by $\gamma_2$ and define  $\psi_2= -\psi_0+ \psi_{\mathrm{min}}$. Hence $\Delta \psi_2= -B$,  $  \inf_{\Omega_2}  \psi_2<0$, and $\psi_2 = 0$ on $\partial\Omega_2=\gamma_2$.  
In view of equation \eqref{lambda-pm}, Theorem \ref{th1.3ga} and the domain monotonicity we then get 
\begin{align*}
\limsup_{h\rightarrow 0}\, h \log \lambda^D_{+} (h, B,\Omega)  
&\leq \limsup_{h\rightarrow 0}\, h \log \lambda^D_{+} (h, B, \Omega_2) \\
& =\limsup_{h\rightarrow 0}\, h \log \lambda^D_{-} (h, -B, \Omega_2) \\
&\leq \, - 2\inf_{\Omega_2} \psi_2 = - 2 \Osc_\Omega \psi_0\,. 
\end{align*}
In either case, an application of inequality \eqref{Lambda-lowerb} completes the proof.
\end{proof}

%%%%%%%%%%%%%%%%%%%%%%%%%%%%%%%%%%%%%%%%%%%%%%%%%%%%%%%%%%%%%%%%%%%%%%%%%%%%%%%%
%%%%%%%%%%%%%%%%%%%%%%%%%%%%%%%%%%%%%%%%%%%%%%%%%%%%%%%%%%%%%%%%%%%%%%%%%%%%%%%%

\section{Proof of Theorem~\ref{th1.6}}

%%%%%%%%%%%%%%%%%%%%%%%%%%%%%%%%%%%%%%%%%%%%%%%%%%%%%%%%%%%%%%%%%%%%%%%%%%%%%%%%
\subsection{A deformation argument}
We have seen that we can have $\psi < 0$ in $\Omega$ without having to assume $B >0$ and that once this property is satisfied we can obtain an upper bound by restricting to the subset of $\Omega$ where $\psi$ is negative instead. 
Hence a natural idea is to consider the family of subdomains of $\Omega$ defined by
\[
\mathcal F = \{ \omega \subset \Omega, \, \partial\omega \in C_{\mathrm{pw}}^{2,+}: \  \Delta \psi = B \ \text{in} \ \omega\,  \mbox{ and } \, \psi |_{\partial\omega} =0 \ \Rightarrow \ \psi < 0 \ \text{in }\ \omega \} \, .
\] 
%We know that $\Omega_B^+\in \mathcal F$. 
The idea behind the proof of Theorem~\ref{th1.6} 
is to show that there exists $\omega\in \mathcal F$ such that
$\Omega_B^+ \subset \omega$ with strict inclusion. More precisely, we have

\begin{proposition}\label{mainprop}
Let $\omega\in \mathcal F$   and let $\psi_\omega$ be the solution of $\Delta \psi = B$ in $\omega$ such that $ \psi_\omega = 0$ on  $\partial\omega$. If $\partial_\nu \psi_\omega >0$ at some regular point $M_\omega$ of $\partial \omega \cap \Omega~$,  then there exists  $\widehat \omega \in \mathcal F$ with $\widehat \omega \supset  \omega$ such that the solution to 
\begin{equation} \label{psitilde}
 \Delta \psi_{\widehat \omega} = B \  \text{in} \  \widehat\omega\,  \quad  \mbox{ and }\quad  \psi_{\widehat \omega} =0 \ \ \text{on} \  \partial\widehat\omega\,, 
\end{equation}
satisfies
\begin{equation}\label{eq:dec}
\inf_{\widehat\omega} \psi_{\widehat \omega} <  \inf_\omega \psi_\omega\,.
\end{equation}
\end{proposition}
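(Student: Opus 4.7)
My plan is to construct $\widehat\omega$ as a small outward Hadamard perturbation of $\omega$ localized near $M_\omega$, and to deduce both conclusions from the shape derivative of $\psi_\omega$. Since $M_\omega$ is a regular point of $\partial\omega$ lying in $\Omega$, the hypothesis and continuity of $\partial_\nu\psi_\omega$ give a relatively open arc $\gamma\subset\partial\omega\cap\Omega$ of class $C^{2,+}$ containing $M_\omega$ along which $\partial_\nu\psi_\omega>0$. Fix a non-negative cut-off $\chi\in C^\infty_c(\partial\omega)$ supported in $\gamma$ with $\chi(M_\omega)>0$, and for small $t>0$ set
\[
\omega_t := \omega\,\cup\,\{p+s\nu(p) : p\in\partial\omega,\ 0\le s<t\chi(p)\},
\]
where $\nu$ is the outward unit normal. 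Then $\omega\subsetneq\omega_t\subset\Omega$, the boundary $\partial\omega_t$ belongs to $C^{2,+}_{\mathrm{pw}}$ (the corners of $\partial\omega$ lying outside $\supp\chi$ are left intact), and $\omega_t\setminus\overline{\omega}$ is a strip of width $O(t)$ concentrated near $M_\omega$. The candidate will be $\widehat\omega:=\omega_t$ for $t>0$ small enough.

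\textbf{Shape derivative and \eqref{eq:dec}.} Let $\psi_t:=\psi_{\omega_t}$. Standard Hadamard calculus, obtained by linearizing the identity $\psi_t(p+t\chi(p)\nu(p))=0$ for $p\in\partial\omega$, yields that the shape derivative $\dot\psi:=\partial_t\psi_t|_{t=0}$ exists on $\omega$, is harmonic there, and satisfies
\[
\dot\psi\big|_{\partial\omega} \,=\, -\chi\,\partial_\nu\psi_\omega,
\]
a boundary datum which is $\le 0$ and strictly negative on the open arc $\{\chi>0\}$. The strong maximum principle therefore forces $\dot\psi<0$ throughout $\omega$. Choosing any point $x_0\in\omega$ at which the continuous function $\psi_\omega$ attains its infimum, the expansion $\psi_t(x_0)=\psi_\omega(x_0)+t\dot\psi(x_0)+o(t)$ gives $\psi_t(x_0)<\psi_\omega(x_0)$ for $t$ small, whence $\inf_{\omega_t}\psi_t\le\psi_t(x_0)<\inf_\omega\psi_\omega$, which is~\eqref{eq:dec}.

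\textbf{Membership in $\mathcal F$.} It remains to check that $\psi_t<0$ throughout $\omega_t$. On $\omega$ itself, the function $\psi_t-\psi_\omega$ is harmonic with boundary trace $\psi_t|_{\partial\omega}=-t\chi\,\partial_\nu\psi_\omega+o(t)\le 0$ on $\partial\omega$ (strictly negative on $\{\chi>0\}$), so the maximum principle gives $\psi_t\le\psi_\omega<0$ on $\omega$. The delicate region is the thin strip $S_t:=\omega_t\setminus\overline{\omega}$, on which $\psi_t$ has boundary values $0$ on $\partial\omega_t\cap\partial S_t$ and $-t\chi\,\partial_\nu\psi_\omega+o(t)\le 0$ on $\partial\omega\cap\partial S_t$. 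In local Fermi coordinates $(p,s)$ the linear barrier $\phi(p,s):=(s-t\chi(p))\,\partial_\nu\psi_\omega(p)$ matches these boundary values up to $o(t)$, is of order $O(t)$, and is strictly negative on the interior of $S_t$; since a particular solution of $\Delta w=B$ on a domain of width $O(t)$ is $O(t^2)$, comparing $\psi_t$ with $\phi$ via the maximum principle on $S_t$ yields $\psi_t<0$ throughout $S_t$ for $t$ small enough. This barrier estimate inside the $O(t)$-thin strip is the main technical point; everything else is standard shape calculus together with the strong maximum principle.
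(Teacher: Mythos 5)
Your strategy coincides at a high level with the paper's: push the boundary outward near $M_\omega$, exploiting $\partial_\nu\psi_\omega>0$. The routes to the two conclusions are worth comparing. For the decrease of the infimum you invoke the Hadamard shape derivative $\dot\psi$ with boundary datum $-\chi\,\partial_\nu\psi_\omega$ and expand $\psi_t(x_0)=\psi_\omega(x_0)+t\dot\psi(x_0)+o(t)$. The paper instead shows directly that $\psi_{\widehat\omega}\circ\Phi_{t_0}^{-1}\to\psi_\omega$ in $H^2(\omega)$ (hence in $C^0(\overline\omega)$) and in $C^{1,+}$ near $M_\omega$ by elliptic regularity, and then applies the maximum principle to $\psi_{\widehat\omega}-\psi_\omega$, which is harmonic in $\omega$, non-positive on $\partial\omega$, and strictly negative on the perturbed arc, to get $\psi_{\widehat\omega}<\psi_\omega$ everywhere in $\omega$. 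Both are valid for this step; yours is a first-order reformulation of the same comparison, and the difference is cosmetic.

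However, there is a real gap in your argument for $\widehat\omega\in\mathcal F$ in the thin strip $S_t$. Your barrier $\phi(p,s)=(s-t\chi(p))\,\partial_\nu\psi_\omega(p)$ is $O(t)$ in size in the middle of the strip, but it degenerates to zero at both faces $s=0$ and $s=t\chi(p)$; the remainder $u:=\psi_t-\phi$ you are estimating is $o(t)$ but not necessarily $o(\phi)$ near $\partial\omega_t$, where $\phi$ itself is tiny. So the conclusion $\psi_t=\phi+u<0$ does not follow near $\partial\omega_t$ from the size bound on $u$ alone. To close this, you need exactly the information the paper extracts: by interior/boundary elliptic regularity near the $C^{2,+}$ arc through $M_\omega$, $\nabla\psi_t\to\nabla\psi_\omega$ in $C^{0}$ (in fact $C^{1,+}$ convergence of $\psi_t$), so $\partial_\nu\psi_t>c>0$ on the relevant portion of $\partial\omega_t$ uniformly for small $t$. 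Since $\psi_t=0$ on $\partial\omega_t$ and has a uniformly positive outward normal derivative there, $\psi_t<0$ within a fixed tubular neighborhood of $\partial\omega_t$, which contains $S_t$ once $t$ is small. This is the paper's finishing step and it replaces your barrier comparison cleanly. With that substitution your proof is essentially the paper's, modulo the shape-derivative packaging of the first step.
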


\begin{proof}
First we deform $\omega$ into $\widehat \omega$ (smooth and small perturbation). We refer to \cite[Chapter~5]{HP} for different ways to do this. 
We can for example  extend the outward normal vector field to a vector field defined in a  tubular neighbourhood of $\partial \omega$ near $M_\omega$. We call this vector field $X_0$, and take a function $\theta$ in $C_0^\infty(\mathbb R^2)$ with compact support near $M_\omega$ and equal to $1$ near $M_\omega$. 
Considering  the vector field $X:=\theta X_0$, which is naturally defined in $\mathbb R^2$, the associated flow $\Phi_t$ of  $X$ defines the desired deformation for $t$ small. 
We then define $\widehat \omega = \Phi_{t_0} (\omega)$ for some $t_0 >0$. Let $\psi_{\widehat \omega}$ be the solution of \eqref{psitilde}. 
Note that $\psi_\omega$ and $\psi_{\widehat\omega} \circ \Phi_{t_0}^{-1}$ are arbitrarily close, for $t_0$ small enough, in $H^2(\omega)$ and hence in $C^0(\bar \omega)$.
Moreover, from the hypothesis it follows that there exists $\epsilon>0$ such that $\partial(\omega \cap B (M_\omega,\epsilon))$ is $C^{2,+}$ regular. The elliptic regularity  theory \cite{D} thus implies that $\psi_\omega$ and $\psi_{\widehat\omega} \circ \Phi_{t_0}^{-1}$ are arbitrarily close in $W^{2,2+\delta}(\omega \cap B(M_\omega, \epsilon))$ for some $\delta>0$, and therefore, by Sobolev embedding theorems,  also in $C^{1,+}(\bar \omega \cap B(M_\omega, \epsilon))$. This means that $\nabla \psi_\omega$ converges to $\nabla\psi_{\widehat\omega} \circ \Phi_{t_0}^{-1}$ in  $C^0 (\bar \omega \cap B(M_\omega, \epsilon))$ as $t_0\to 0$. Since 
$\partial_\nu \psi_\omega >0$ in $M_\omega$, we infer that $\psi_{\widehat \omega}$ is negative in $(\widehat\omega\setminus \omega) \cap B(M_\omega, \epsilon)$ for $\epsilon$ small enough. Hence  $\psi_{\widehat \omega} - \psi_\omega$ is harmonic in $\omega$, non positive on $\partial \omega$, and strictly negative on 
$ (\partial\omega\setminus \partial\widehat\omega) \cap B(M_\omega, \epsilon)$. 
The maximum principle then gives 
$\psi_{\widehat \omega} < \psi_\omega$ in $\omega$. In particular, we get \eqref{eq:dec}.  We also have $\psi_{\widehat \omega} < 0$ in $\widehat\omega$ for $t_0$ small enough. This shows that $\widehat\omega\in \mathcal F$. 
\end{proof}

%%%%%%%%%%%%%%%%%%%%%%%%%%%%%%%%%%%%%%%%%%%%%%%%%%%%%%%%%%%%%%%%%%%%%%%%%%%%%%%%
\subsection{Proof of Theorem~\ref{th1.6}}
With the deformation argument at hand, we are now ready to give a
\begin{proof}[Proof of Theorem \ref{th1.6}]
This is now an immediate application of Proposition~\ref{mainprop} if we can show that at some point of $B^{-1}(0)$, $\partial_\nu \hat\psi_0 \neq 0$.  However, from the Hopf boundary lemma it follows  that at every regular point of $(\partial\Omega_B^+)\cap\Omega$ we have  $\partial_\nu \hat\psi_0 > 0$. Hence the claim.
\end{proof}

%%%%%%%%%%%%%%%%%%%%%%%%%%%%%%%%%%%%%%%%%%%%%%%%%%%%%%%%%%%%%%%%%%%%%%%%%%%%%%%
%%%%%%%%%%%%%%%%%%%%%%%%%%%%%%%%%%%%%%%%%%%%%%%%%%%%%%%%%%%%%%%%%%%%%%%%%%%%%%%%

\section{Radial magnetic fields} \label{s-radial}
A typical application of the general theorems concerns radial magnetic fields in $\Omega= D(0,R)$, the disk
of radius $R$ centred at $0$. 
The following result is a combination of the theorems appearing in Section~\ref{s2}.
\begin{theorem}\label{thm-radial}
Assume that $\Omega=D(0,R)$, and that the
magnetic field $B$ is radial and continuous. Then, 
\[
\lim_{h\rightarrow 0} h \log  \Lambda^D(h,B,\Omega) = - 2  \Osc \psi_0. 
\]
\end{theorem}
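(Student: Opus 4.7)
The plan is to exploit the rotational symmetry of the problem to reduce the claim to the two results from Section~\ref{s2}, namely Theorem~\ref{thm-Lambda1} and Theorem~\ref{thm-closed}. Since $\Omega=D(0,R)$ and $B$ are both rotationally invariant, by uniqueness the solution of~\eqref{defpsi0} inherits this symmetry, so I would write $\psi_0(x)=\phi(|x|)$ for a continuous function $\phi\colon[0,R]\to\mathbb{R}$ with $\phi(R)=0$.

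I would then split into cases on the sign behaviour of $\psi_0$. If $B\equiv 0$ then $\psi_0\equiv 0$, $\Osc\psi_0=0$, and $\Lambda^D(h,B,\Omega)=h^2\lambda^D(\Omega)$, so the identity is trivial. If $B\not\equiv 0$ but $\psi_0$ does not change sign in $\Omega$, then Theorem~\ref{thm-Lambda1} applies directly and yields the result.

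The interesting case is when $\psi_0$ changes sign, so that $\psi_{\mathrm{min}}<0<\psi_{\mathrm{max}}$. Here the plan is to verify the hypothesis of Theorem~\ref{thm-closed}. Choose radii $r_*,r^*\in[0,R)$ realising $\phi(r_*)=\psi_{\mathrm{min}}$ and $\phi(r^*)=\psi_{\mathrm{max}}$; they lie strictly inside the interval since $\phi(R)=0$, and are distinct. If $r_*<r^*$, then $r^*>0$ and the circle $\{|x|=r^*\}$ is a smooth (hence $C^{2,+}$) closed curve contained in $\psi_0^{-1}(\psi_{\mathrm{max}})$, and the open disk it bounds contains either the circle $\{|x|=r_*\}$ (when $r_*>0$) or the origin (when $r_*=0$), in either case a non-empty subset of $\psi_0^{-1}(\psi_{\mathrm{min}})$. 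The opposite case $r^*<r_*$ is handled symmetrically, with the roles of min and max swapped. In both configurations the hypothesis of Theorem~\ref{thm-closed} is satisfied, and the limit identity follows.

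The main conceptual observation is that radial symmetry of $\psi_0$ forces every non-trivial level set to be a union of concentric circles (possibly together with the origin), so the required enclosure condition comes essentially for free; no real technical obstacle is anticipated beyond keeping track of the edge cases where one extremum is attained at $r=0$ or on an annulus, in which case any interior circle of the annulus provides the required $C^{2,+}$ closed curve.
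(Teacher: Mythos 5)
Your proposal is correct and follows the same strategy as the paper: observe that $\psi_0$ is radial, then dispatch to Theorem~\ref{thm-Lambda1} when $\psi_0$ has constant sign and to Theorem~\ref{thm-closed} when it changes sign. The only difference is that you spell out the verification of the enclosure hypothesis of Theorem~\ref{thm-closed} via concentric circles (and the $r_*=0$ edge case), which the paper leaves implicit.
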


\begin{proof}
We observe that the solution $\psi_0$ of \eqref{defpsi0} is radial, and 
write $r=\bigl(x_1^2+x_2^2\bigr)^{1/2}$ and $\phi_0(r)=\psi_0(x_1,x_2)$.
 If $\phi_0$  has a constant sign in  $(0,R)$, then Theorem \ref{thm-Lambda1} completes the proof. On 
the other hand, if $\phi_0$ changes sign in $(0,R)$, then 
the claim then follows from Theorem \ref{thm-closed}.
\end{proof}

\begin{remark}
Proposition \ref{mainprop} and  Theorem \ref{thm-radial} suggest that if $\psi_0^{-1} (0,+\infty) \cap \Omega \neq \emptyset\,$, then in order to obtain an optimal upper bound one should find an open set $\omega$  in $\Omega$ for which the solution $\psi_\omega$ of $\Delta \psi = B$ in $\omega$ and $\psi =0$ on $\partial \omega$ satisfies in addition $\partial_\nu \psi =0$ on $\partial \omega \cap \Omega$.
\end{remark}

\begin{example}
As an example of a radial field we consider the function
\[
B_\beta(x_1,x_2) = \beta^2-r^2,\quad   0\leq \beta\leq 1,
\]
in the unit disc $D(0,1)$. An explicit solution of
\[
\Delta \psi_0 = B_\beta\text{ in }D(0,1),\quad \psi_0=0\text{ on }\partial D(0,1),
\]
is radial, and given by
\[ 
\psi_0 (x_1,x_2) = \frac{1}{16} \, (r^2 +1-4\beta^2 ) (1-r^2).
\]
Hence for $\beta\in [0,\frac 12) \cup  [\frac{1}{\sqrt{2}}, 1]$ we can apply Theorem \ref{thm-Lambda1}, while the case $\beta \in [\frac 12,  \frac{1}{\sqrt{2}}]$ is covered by Theorem \ref{thm-radial}. A simple calculation then shows that 
\begin{equation*} 
\lim_{h\rightarrow 0} h \log  \Lambda^D(h,B_\beta,D(0,1)) 
= 
- 2  \Osc \psi_0 
= 
-\begin{cases}
\frac {1}{8} (2\beta^2-1)^2 & \beta \in (0,\frac 12) \\
\frac{1}{2} \beta^4 & \beta \in [\frac 12,  \frac{1}{\sqrt{2}}] \\
 \frac {1}{8} (4 \beta^2-1) & \beta \in ( \frac{1}{\sqrt{2}}, 1).
\end{cases}
\end{equation*} 
\end{example}

\begin{remark}
Since $\Lambda^D(h,B_\beta,D(0,1))=\Lambda^D(h,-B_\beta,D(0,1))$
the example above also covers the magnetic field $-B_\beta$, i.e. the case when the
magnetic field is negative inside the domain delimited by the circle $B_\beta^{-1}(0)$.
\end{remark}

%%%%%%%%%%%%%%%%%%%%%%%%%%%%%%%%%%%%%%%%%%%%%%%%%%%%%%%%%%%%%%%%%%%%%%%%%%%%%%%%
%%%%%%%%%%%%%%%%%%%%%%%%%%%%%%%%%%%%%%%%%%%%%%%%%%%%%%%%%%%%%%%%%%%%%%%%%%%%%%%%

\section{A magnetic field vanishing on a line\\joining two points of the boundary}\label{s6}

\subsection{Preliminaries}
We present a numerical study for the case when the zero-set of the magnetic field, 
$B^{-1}(0)$, is a line joining two points of the boundary. 
We consider again $\Omega=D(0,1)$, the disk of radius $1$, and assume that 
\[
B_\beta(x_1,x_2) = \beta - x_1,\quad -1<\beta<1.
\]
This means that $B_\beta^{-1}(0)$ is given by the line $x_1=\beta$. The solution of 
\[
\Delta \psi_\beta = (\beta-x_1) \mbox{ in } D(0,1)\,,\, \psi_\beta =0 \mbox{ on } \partial D(0,1) \,,
\]
is given by 
\begin{equation}\label{excase1a}
\psi_\beta (x_1,x_2)= \frac 18 (x_1-2\beta) \bigl(1-x_1^2-x_2^2\bigr).
\end{equation}
A straightforward calculation shows that
\begin{equation}\label{excase1c}
\max \psi_\beta=
\begin{cases}
\psi_\beta\Bigl(\frac{2\beta+\sqrt{3+4\beta^2}}{3},0\Bigr) & \text{for }-1<\beta<1/2\,,\\
0 &  \text{for } 1/2 \leq \beta < 1\,,
\end{cases}
\end{equation}
and
\begin{equation}\label{excase1d}
\min \psi_\beta=
\begin{cases}
0 & \text{for } -1 < \beta \leq -1/2\,,\\
\psi_\beta\Bigl(\frac{2\beta-\sqrt{3+4\beta^2}}{3},0\Bigr) & \text{for } -1/2<\beta<1\,.
\end{cases}
\end{equation}
It follows that
\begin{equation}\label{excase1e}
\Osc\psi_\beta=
\begin{cases}
\psi_\beta\Bigl(\frac{2\beta+\sqrt{3+4\beta^2}}{3},0\Bigr) & -1 < \beta\leq -1/2\,,\\
\psi_\beta\Bigl(\frac{2\beta+\sqrt{3+4\beta^2}}{3},0 \Bigr)
-
\psi_\beta\Bigl(\frac{2\beta-\sqrt{3+4\beta^2}}{3},0 \Bigr) &  -1/2< \beta<1/2\,,\\
-\psi_\beta\Bigl(\frac{2\beta-\sqrt{3+4\beta^2}}{3},0 \Bigr) & 1/2\leq\beta< 1\,.
\end{cases}
\end{equation}
 When $\beta \in [\frac 12,1)$ we can apply Theorem \ref{th1.3}  and get the optimal result:
\[
\lim_{h\rightarrow 0} h \log \lambda_-^D(h,B,\Omega) =- 2 \Osc (\psi_\beta)\,.
\]
We now look at the case $\beta \in (-\frac 12, \frac 12)$ and compare what is given by the different methods. To apply Corollary \ref{th1.4}, we introduce 
$\Omega_\beta^+$ as the subset of $D(0,1)$ where $B_\beta$ is 
positive, i.e.  $\Omega_\beta^+=\{(x_1,x_2)\in D(0,1)~|~ x_1<\beta\}$.
The solution $\hat\psi_\beta$ of 
\begin{equation}\label{excase1b}
\Delta \hat \psi_\beta = (\beta - x_1) \text{ in } \Omega_\beta^+,\quad \hat \psi_\beta =0 \text{ on } \partial \Omega_\beta^+ \,,
\end{equation}
is not explicit, except in the case $\beta=0$, where we have $\hat\psi_0=\psi_0$ in $\Omega_0^+$.
The oscillation of $\hat\psi_\beta$ can be calculated numerically (See Figure~\ref{fig:oscdiff}).
\begin{figure}[htp]
\centering
\includegraphics[width=0.6\textwidth]{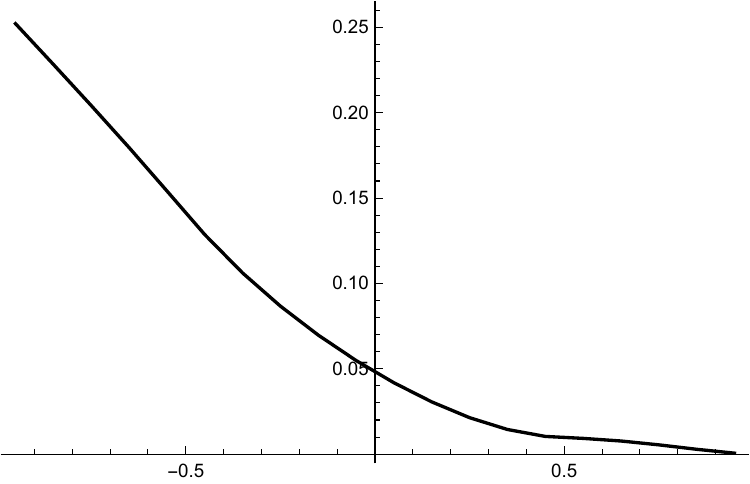}
\caption{Here we have plotted $\Osc \psi_\beta-\Osc \hat\psi_\beta$
as a function of $\beta\in(-1,1)$.}
\label{fig:oscdiff}
\end{figure}
 Hence Corollary \ref{th1.4} gives
\begin{equation}\label{maj1}
\limsup_{h\rightarrow 0} h \log \lambda_-^D(h,B,\Omega)\leq - 2 \Osc (\hat \psi_\beta)\,.
\end{equation}
and Theorem \ref{th1.3ga} gives
\begin{equation}\label{maj2}
\limsup_{h\rightarrow 0} h \log \lambda_-^D(h,B,\Omega)\leq 2 \inf  \psi_\beta
\end{equation}
None of these bounds coincides with the lower bound.

%%%%%%%%%%%%%%%%%%%%%%%%%%%%%%%%%%%%%%%%%%%%%%%%%%%%%%%%%%%%%%%%%%%%%%%%%%%%%%%%
\subsection{Application of Proposition \ref{mainprop}}
We  continue to assume that
\[
-\frac 12 < \beta < \frac 12 \,.
\]
Here we discuss the application  of Proposition~\ref{mainprop} when applied with $\omega = \Omega^+_{2\beta}$.  We note indeed that  $ \Omega^+_{2\beta}$
 belongs to $\mathcal F$.
We need to compute the normal derivative 
\[
 \partial_{x_1} \psi_\beta (x_1,x_2)= \frac 18 (1-x_1^2 - x_2^2) -\frac{x_1}{4} (x_1-2\beta)\,,
\]
with $\psi_\beta$ defined in~\eqref{excase1a}, and to observe that it does not vanish in $\Omega$ on the line $x_1=2\beta$.
As a consequence, Proposition~\ref{mainprop} implies that the upper bound \eqref{maj2} is not optimal. 
Pushing the boundary will indeed improve the upper bound. 
The question of the optimality of the lower bound remains open.

%%%%%%%%%%%%%%%%%%%%%%%%%%%%%%%%%%%%%%%%%%%%%%%%%%%%%%%%%%%%%%%%%%%%%%%%%%%%%%%%
\subsection{Researching a better upper bound}
Given $\beta$ it is interesting to find the largest possible
domain $\Omega_{\mathrm{max}}\subset D(0,1)$ such that the solution to
\begin{equation}
\label{eq:OptimalOmega}
\Delta \psi = \beta-x_1, \text{ in } \Omega_{\mathrm{max}},
\quad
\psi=0, \text{ on } \partial\Omega_{\mathrm{max}}
\end{equation}
is strictly negative in $\Omega_{\mathrm{max}}$. 
The oscillation of that solution could contribute to a candidate
for the optimal constant in the asymptotic of the Pauli eigenvalue. 
We are only able to consider this problem numerically.

To find $\Omega_{\mathrm{max}}$ numerically, we follow (a slightly modified
version of) an iterative procedure
that was kindly suggested by Stephen Luttrell \cite{SL}, described below. We
start by numerically solving the problem on a regular polygon with (many) corners,
positioned on the unit disk.
Then we look at the sign of the solution close to each corner of the polygon, and move the 
corresponding points to make the new domain smaller if the calculated value is 
positive, and larger if it is negative. We also make sure that no point moves 
out from the disk. This gives us a new set of points. We build a new
polygon, and repeat the procedure until the Euclidean distance between the 
corners of two iterations becomes as small as we wish 
(see Figure~\ref{fig:iterativedomains} for an example).

\begin{figure}[htb]
\includegraphics[width=0.8\textwidth]{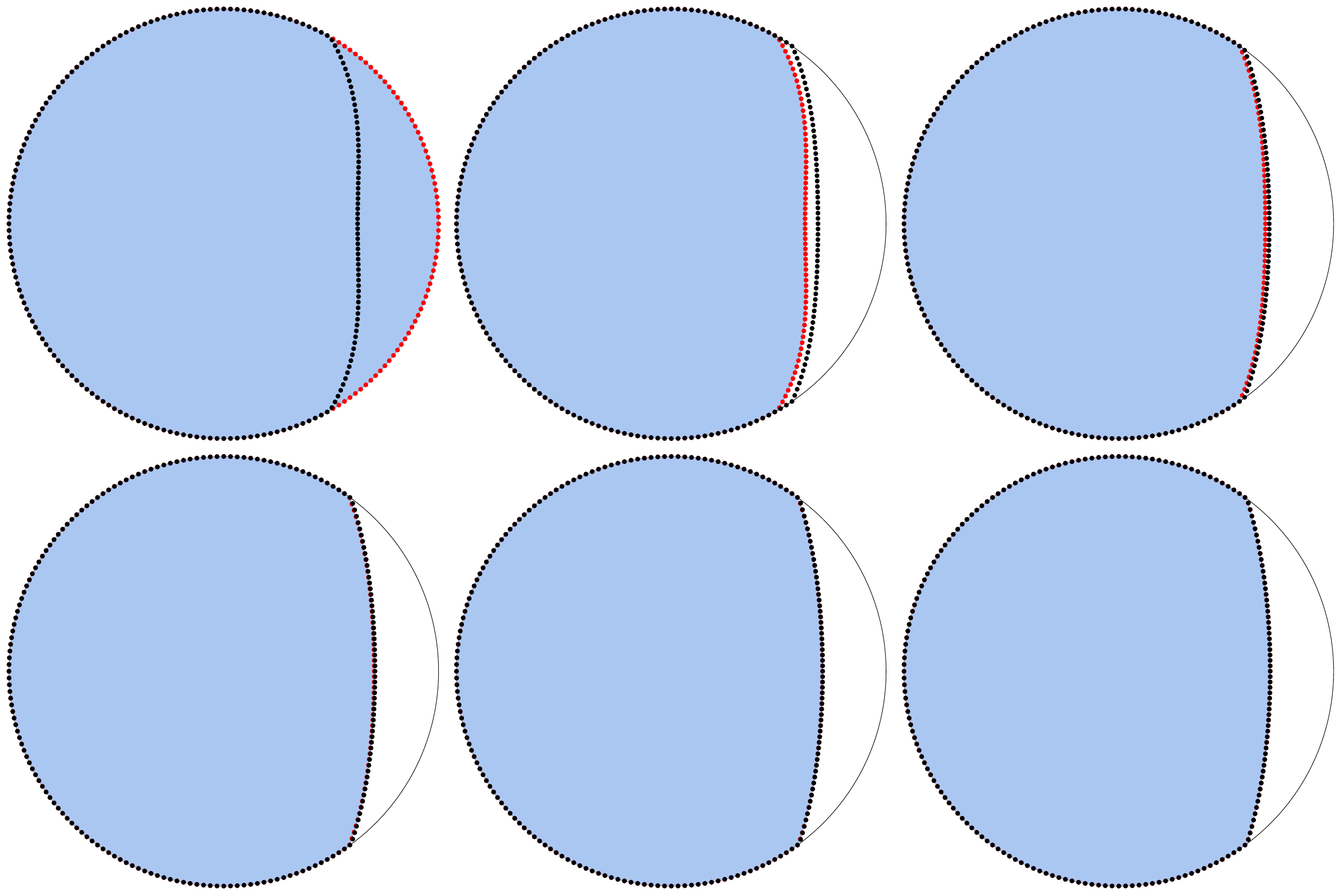}
\caption{The set of domains converge quickly. In this example we start with $200$ vertices, 
$\beta=0.2$, and we exit the loop when the square norm of the difference between 
two consecutive iterations become less than $0.005$ (after five steps). 
The red dots show the vertices used in the domain of the
current step, and the black dots the ones that are calculated for the next step.}
\label{fig:iterativedomains}
\end{figure}

We denote by $\psi_{\beta,\mathrm{opt}}$ the function we end up with after the
iterative procedure (ideally a solution of~\eqref{eq:OptimalOmega}).
In Figure~\ref{fig:psigraphs} we have made a comparison of the oscillation
of $\psi_{\beta,\mathrm{opt}}$ and $\hat\psi_\beta$, and we find that
the oscillation of $\psi_{\beta,\mathrm{opt}}$ is slightly larger than that
of $\hat\psi_\beta$.

\begin{figure}[htbp]
\includegraphics[width=0.6\textwidth]{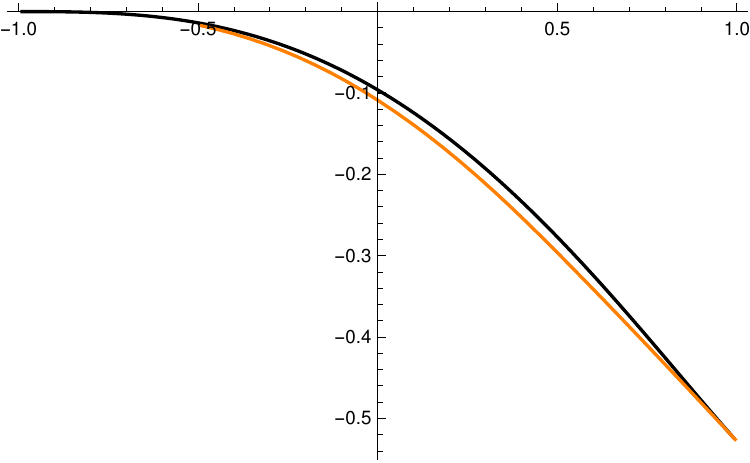}
\caption{The oscillation of $\psi_{\beta,\mathrm{opt}}$ is slightly larger
than that of $\hat\psi_\beta$. Here we show $-2\Osc\psi_{\beta,\mathrm{opt}}$
for $-0.5\leq\beta\leq 1$ 
(orange) and $-2\Osc\hat\psi_\beta$ for $-1\leq\beta\leq 1$ (black).}
\label{fig:psigraphs}
\end{figure}

%%%%%%%%%%%%%%%%%%%%%%%%%%%%%%%%%%%%%%%%%%%%%%%%%%%%%%%%%%%%%%%%%%%%%%%%%%%%%%%%

\section{Coming back to the Witten Laplacian} \label{sec-witten}

\subsection{Former results}
As already mentioned in \cite{HPS}, 
the problem we study is quite closely connected  with the question of analyzing the 
smallest eigenvalue of the Dirichlet realisation of the operator $\Delta^{(0)}_f$ associated in $L^2(\Omega, e^{-2f(x)/h}\, dx)$
with the quadratic form:
\begin{equation}\label{DirForm}
C_0^\infty(\Omega)\ni v 
\mapsto h^2 \,\int_{\Omega}\left|\nabla  v (x)\right|^2\;e^{-2f(x)/h}~dx\;.
\end{equation}
For this case, we can mention  Theorem~7.4 in~\cite[p.~190]{FrWe}, 
which says (in particular) that, if $f$ has a unique critical point which is a non-degenerate local 
minimum $x_{\mathrm{min}}$, then the lowest eigenvalue $\lambda_1(h)$
of the Dirichlet realization $\Delta^{(0)}_{f,h}$ in $\Omega$ satisfies
\begin{equation}\label{resFW} 
\lim_{h\rightarrow 0} - h \,\log \lambda_1(h)
= 2 \inf_{x\in \partial \Omega} \bigl(f(x) - f(x_{\mathrm{min}})\bigr).
\end{equation}

More precise or general results (pre-factors) are given in~\cite{BEGK, BGK,HeNi}. 
This is connected with the semi-classical analysis of Witten 
Laplacians~\cite{W,HKN}.

Starting from
\begin{equation}\label{eq:pr1}
\langle u,P_-u\rangle
= h^2 \int_\Omega \exp \Bigl(- 2 \frac{\psi}{h}\Bigr) \bigl|(\partial_{x_1} + i \partial_{x_2}) v\bigr|^2\, dx,
\end{equation}
with $u = \exp\bigl(- \frac{\psi}{h}\bigr) v$, where $\psi$ satisfies $\Ab = (-\partial_{x_2} \psi, \partial_{x_1} \psi)$,
we observe that if $v$ is real, then 
\[
|(\partial_{x_1} + i \partial_{x_2})v|^2 = |\nabla v|^2.
\]
Using the min-max caracterisation, this implies that the ground state energy of 
the Pauli operator $P_-$ is lower than the the ground state energy
of  
the Dirichlet realisation of the semi-classical Witten Laplacian on $0$-forms:
\[
-h^2 \Delta + |\nabla \psi|^2 - h \Delta \psi\,.
\]
This problem has been analyzed in detail in~\cite{HeNi} 
with computation of pre-factors \emph{but} under generic conditions on 
$\psi$ (see~\cite[Theorem~1.1]{HeNi}) which are not  satisfied in our case. 
The restriction of $\psi$ at the boundary is indeed not a Morse function. 
Hence it is difficult to define the points at the boundary which should be 
considered as saddle points. 
One can  nevertheless think of a small perturbation of $\psi$ to get the 
conditions satisfied (see for example~\cite{HSBott}). Another remark is that, analyzing  the proof in \cite{HeNi}, the Morse assumption at the boundary appears only at the point where the normal exterior derivative of $\psi$ at the
 boundary is strictly positive. Finally, non generic situations are treated in \cite{Mi, GLLN1, GLLN2,Ne}.
One can illustrate this discussion with various instructive examples.
\subsection{First example} 
We discuss what gives the Witten approach in the case when in the unit disk
in~\eqref{excase1a}:
\begin{equation*}
\psi_\beta (x_1,x_2)= \frac 18 (x_1-2\beta) \bigl(1-x_1^2-x_2^2\bigr)\,,
\end{equation*} 
and one should look at the critical points of $\psi_\beta$.
We have
\[
\partial_{x_1} \psi_\beta = \frac 18 \bigl( 1-3 x_1^2 + 4 \beta x_1 - x_2^2\bigr),\quad 
\partial_{x_2} \psi_\beta(x_1,x_2) = \frac 14 x_2 (-x_1 +2\beta).
\]
Hence $\partial_{x_2} \psi_\beta $ vanishes either on the symmetry axis $x_2=0$ or 
on the zero set of $\psi_\beta$. 
%We have now
%\[
%\partial_{x_1} \psi_\beta = \frac 18 \bigl( 1-3 x_1^2 + 4 \beta x_1 - x_2^2\bigr).
%\]
The critical points of $\psi_\beta$ are consequently 
either given by $x_2=0$ and $1-3x_1^2 + 4 \beta x_1 =0$,
or by $x_1=2\beta$ and $x_2^2 = 1-4 \beta^2$.

If $\beta \in (-\frac 12, \frac 12)$, we have on $x_2=0$ two critical points 
corresponding to a maximum and a minimum of $\psi_\beta$, and, on $x_1=2\beta$, 
two symmetric critical points corresponding if $\beta\neq 0$  to two saddle points.
 
 If we apply (generalization of) the results of Helffer--Nier \cite{HeNi}, the rate of decay 
will correspond to the difference between the minimum (which is unique) and the 
value at a saddle point (which is in any case $0$).  The Freidlin--Wentzell theorem (see~\eqref{resFW})  does not directly applied (we have  more than one critical point) but does not give a better result. 
We can consequently  not get in this way the oscillation 
of~$\psi_\beta$.
  
Hence  for this example, the upper bound given by the Witten Laplacian does not lead to any improvement in comparison with the result obtained previously by Theorem \ref{th1.3ga}.

\subsection{Second example} 
In the same spirit we now consider in the unit disk
the function
\[
\psi_0 (x_1,x_2) = (1-x_1^2-x_2^2) (6 x_1^2 + 3 x_2^2 - x_2^4 -1)\,.
\]

\begin{figure}[htp]
\centering
\includegraphics[width=0.5\textwidth]{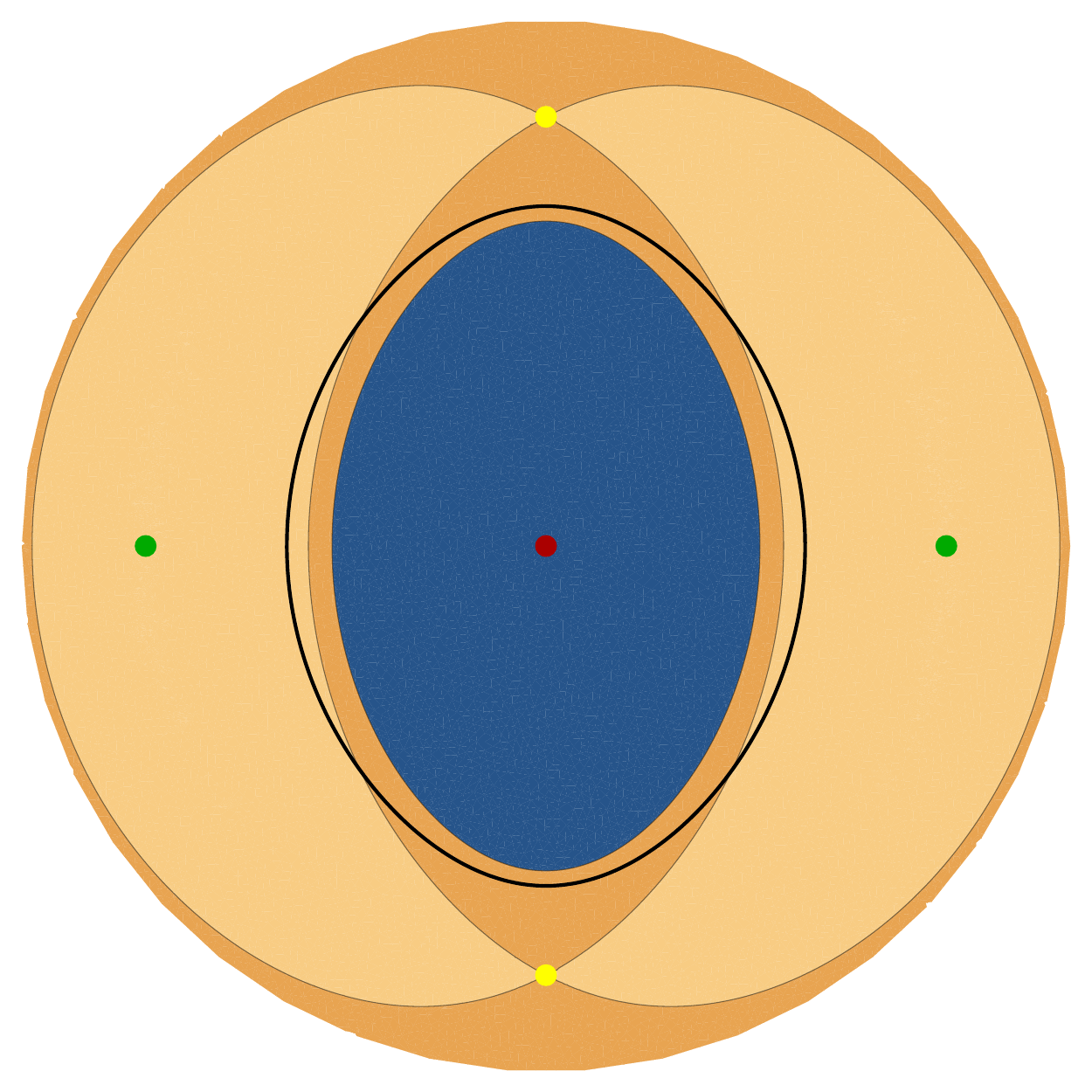}
\caption{Blue means $\psi_0 <0\,$. The  (common)  level set of the yellow saddle points are represented by continuous lines. The two 
maxima are indicated with green dots, the saddle points with yellow, and the
minima in red. The bold black line shows the set $\{\Delta\psi_0=0\}$.}
\label{fig:psicontour}
\end{figure}
%\begin{figure}[htp]
%\centering
%\includegraphics[width=0.6\textwidth]{autredessin.png}
%\caption{Alternative picture with two additional level sets.}
%\label{new}
%\end{figure}
We will show below  that this function has one minimum at $(0,0)$, two symmetric maxima on $\{x_2 =0\}$  and two symmetric saddle points on $\{x_1=0\}$.
Moreover, at the boundary
\[
\partial_{x_1}\psi_0 = - 2 x_1(6x_1^2+3x_2^2-x_2^4-1),\quad
\partial_{x_2}  \psi_0 = - 2 x_2(6x_1^2+3x_2^2-x_2^4-1)
\]
so, using  the inequality $(6 x_1^2 + 3 x_2^2 - x_2^4 -1) \geq  1$, we have
\[
\partial_\nu \psi_0:= x_1\partial_{x_1}\psi_0+ x_2 \partial_{x_2} \psi_0 = -2 (6x_1^2+3x_2^2-x_2^4-1) \leq -2\,.
\]
Let us determine the critical points and analyze the Hessian at the critical points. We have
\[
\begin{aligned}
\partial_{x_1} \psi_0 (x_1,x_2)& =2x_1 (7-12 x_1^2-9 x_2^2+ x_2^4) \,, \\
\partial_{x_2} \psi_0 (x_1,x_2)& = 2x_2 (4 -9 x_1^2-8 x_2^2+ 3 x_2^4 + 2 x_1^2x_2^2)\,,\\
\partial^2_{x_1x_1} \psi_0 (x_1,x_2)& = 14 - 72 x_1^2 -18 x_2^2 + 2 x_2^4 \,, \\
\partial^2_{x_1x_2} \psi_0 (x_1,x_2)& =4x_1 x_2(- 9 + 2x_2^2) \,, \\
\partial^2_{x_2x_2} \psi_0 (x_1,x_2)& =  8 -18 x_1^2-48x_2^2+ 30x_2^4 + 12 x_1^2 x_2^2  \,.
\end{aligned}
\]

\paragraph{\bfseries Critical points on the coordinate axes.}
It is easy to compute the critical points living inside the unit disk on $\{x_1=0\}$ and $\{x_2=0\}$.
On $\{x_1=0\}$, we get $ x_{2,\mathrm{sp}}^\pm = \pm\sqrt{ 2/3}$ and one can verify  that this corresponds to  saddle points. We have indeed
\[
\Hess \psi_0 ( 0,  x_{2,sp}^\pm) = 
\begin{pmatrix} 
\frac{26}{9} & 0\\ 
0 & - \frac {32}{3}
\end{pmatrix}
\quad
\text{and}
\quad
\psi_0(0, x_{2,\mathrm{sp}}^\pm) = \frac{5}{27} >0.
\]
On $\{x_2=0\}$, we get $x_{1,\mathrm{min}}=0$ corresponding to a non degenerate 
minimum,
\[
\Hess \psi_0 ( 0,  0) = 
\begin{pmatrix} 14  & 0\\ 0 &  8   
\end{pmatrix}
\quad
\text{and}
\quad
\psi_0(0,0)= -1.
\]
We also get two non degenerate maxima with  $x^\pm_{1,\mathrm{max}} = \pm \sqrt{7/12}$, 
\[
\Hess \psi_0(x^\pm_{1,\mathrm{max}} ,0)  = 
\begin{pmatrix}
-28 & 0\\ 0 & - \frac 52 
\end{pmatrix},
\quad
\text{and}
\quad
\psi_0(x^\pm_{1,\mathrm{max}}, 0) = \frac{25}{24}\,.
\]
We recall that,  at each of these non degenerate  critical points, the computation of  the Hessian and the fact in the case of the extrema that the two eigenvalues are distinct determine the local picture of the integral curves of $\nabla \psi_0$.

\paragraph{\bfseries No critical point outside the coordinate axes.}
Substituting $u=x_1^2$ and $v=x_2^2$ the equations for critical points outside
the coordinate axes becomes
\[
7 -12  u-9 v+ v^2 =0,\quad 4 -9 u -8 v + 3 v^2 + 2 uv =0\
\]
with the additional condition that $u$ and $v$ should belong to the triangle $\{u >0, v>0, u+v < 1\}$.
From the first equation we can solve for $u$, and inserting it into the second
equation we find that $v$ must satisfy the cubic equation
\[
0=\frac{1}{12}\bigl(2v^3+9v^2-v-15\bigr)=\frac{1}{12}(2v+3)(v^2+3v-5).
\]
Clearly, this is not possible if $0<v<1$.

%It is less easy  to show analytically that there are no other critical point. Taking the coordinates $u=x_1^2$, $v=x_2^2\,$, and looking at the gradient of $\tilde \psi (u,v)=\psi_0(x_1,x_2)$, this results from the analysis of the critical points of $\nabla \tilde \psi$ 
%  which should satisfy  
%\[
%7 -12  u-9 v+ v^2 =0\,,\, 4 -9 u -8 v + 3 v^2 + 2 uv =0\,.
%\]
%This corresponds with  the intersection of two conics and lying inside the triangle $\{u >0, v>0, u+v < 1\}$. This is at least easy to verify numerically and a complete mathematical proof can be obtained by observing that the straight line $\{7-12 u -9v=0\}$ separate
% the two curves inside this triangle.
% \begin{figure}[htp]
%\centering
%\includegraphics[width=0.6\textwidth]{Criticalpoints.pdf}
%\caption{Critical points in the triangle. Note that $u$, $v$ are replaced by $(x,y)$ in the comments.}
%\label{new}
%\end{figure}

\medskip

\paragraph{\bfseries Conclusions for this example.}
According to the previous remark, we can apply the \enquote{interior results} (see~\cite{HKN}) for getting the main asymptotic. Here we note that Di Gesu--Leli\`evre--Lepeutrec--Nectoux~\cite{GLLN2}  treat the case of saddle points of same value (this case was excluded in \cite{HeNi}).

The magnetic field in the unit disk  is given by
\[
B(x_1,x_2)= 22 - 90 x_1^2 - 66 x_2^2 + 12 x_1^2 x_2^2 + 32 x_2^4\,.
\]
and vanishes at order $1$  along a closed regular curve (see Figure~\ref{fig:psicontour}).
%\begin{figure}[htp]
%\centering
%\includegraphics[width=0.6\textwidth]{psi-b-twoinone-example}
%\caption{Same picture where the zero set of $B$ is added. }
%\label{newnew}
%\end{figure}
We can now apply either
\begin{enumerate}
\item  Theorems~\ref{th1.3ga} and~\ref{th1.6} in $\Omega^+_B:=B^{-1}{ ((-\infty,0))}$,
\item[] or
\item the following  generalization of Theorem~\ref{thm-closed} which is given below.
\end{enumerate}
\begin{theorem} \label{thm-closedsp}
Let $\Omega$ be a simply connected domain in $\mathbb R^2$, and let $\psi_0$ be 
given by~\eqref{defpsi0}. Assume that there exists a critical point  ${\mathbf x}_{\mathrm{sp}}$
\[
\psi_{\mathrm{min}}=\inf_\Omega  \psi_0  < 0 < \psi_0 ({\mathbf x}_{\mathrm{sp}}):=\psi_{\mathrm{sp}}\,.
\]
Assume further that $\psi_0^{-1} (\psi_{\mathrm{sp}})$ contains a $C^{2,+}$ closed curve enclosing a non-empty part of $\psi_0^{-1} (\psi_{\mathrm{min}})$. Then 
\begin{equation} \label{lim-Lambdasp} 
\lim_{h\rightarrow 0} h \log  \Lambda^D(h,B,\Omega) \leq - 2 (  \psi_{\mathrm{sp}} - \psi_{\mathrm{min}})\,.
\end{equation} 
\end{theorem}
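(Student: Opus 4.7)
The plan is to adapt the restriction argument used for Theorem~\ref{thm-closed}, substituting the saddle value $\psi_{\mathrm{sp}}$ for the role played by $\psi_{\mathrm{max}}$ there. By hypothesis there is a $C^{2,+}$ closed curve $\gamma \subset \psi_0^{-1}(\psi_{\mathrm{sp}})$ enclosing a non-empty part of $\psi_0^{-1}(\psi_{\mathrm{min}})$. I would call $\Omega_1\subset\Omega$ the region bounded by $\gamma$, which is simply connected by the Jordan curve theorem, and first check that $\Omega_1$ fulfils Assumption~\ref{assumptions}: this is immediate since $\gamma$ is $C^{2,+}$ and sits strictly in the interior of $\Omega$, so there are no corner issues on $\partial\Omega_1$.

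Next, on $\Omega_1$ I would introduce $\psi_1 := \psi_0 - \psi_{\mathrm{sp}}$. Since $\gamma$ is a level curve of $\psi_0$ at height $\psi_{\mathrm{sp}}$, we have $\Delta\psi_1 = B$ in $\Omega_1$ and $\psi_1 = 0$ on $\partial\Omega_1$; the enclosing hypothesis guarantees that $\psi_0$ attains the value $\psi_{\mathrm{min}}$ somewhere inside $\Omega_1$, so
\[
\inf_{\Omega_1}\psi_1 = \psi_{\mathrm{min}}-\psi_{\mathrm{sp}} < 0.
\]
Thus $\psi_1$ is exactly the scalar potential of type~\eqref{defpsi0} associated with the Pauli operator on $\Omega_1$ (the additive shift does not affect $\nabla^\perp\psi$, hence the vector potential is unchanged). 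Theorem~\ref{th1.3ga} then applies on $(\Omega_1,\psi_1)$ and yields
\[
\limsup_{h\to 0} h \log\lambda_-^D(h,B,\Omega_1) \leq 2\inf_{\Omega_1}\psi_1 = -2(\psi_{\mathrm{sp}}-\psi_{\mathrm{min}}).
\]

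To transfer the bound back to $\Omega$, I would invoke Dirichlet domain monotonicity $\lambda_-^D(h,\Ab,B,\Omega) \leq \lambda_-^D(h,\Ab,B,\Omega_1)$, combined with $\Lambda^D \leq \lambda_-^D$ from~\eqref{Lambda}. This gives
\[
\limsup_{h\to 0} h \log\Lambda^D(h,B,\Omega) \leq -2(\psi_{\mathrm{sp}}-\psi_{\mathrm{min}}),
\]
which is the asserted inequality (the $\lim$ in the statement being understood as $\limsup$).

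The conceptual content is essentially identical to that of Theorem~\ref{thm-closed}: the only substantive observation is that the critical value $\psi_{\mathrm{sp}}$ can play the same role there as $\psi_{\mathrm{max}}$, because what matters for applying Theorem~\ref{th1.3ga} is purely that the subtracted constant produces a subdomain on which the shifted potential has negative infimum and vanishing trace. I do not expect any serious technical obstacle; the only mildly delicate point is to confirm that the closed $C^{2,+}$ curve $\gamma$ bounds a region compatible with Assumption~\ref{assumptions}, which reduces to the fact that $\gamma$ lies strictly inside $\Omega$ and is itself smooth.
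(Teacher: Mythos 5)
Your proof is correct and is precisely the argument the paper implicitly intends: the authors state Theorem~\ref{thm-closedsp} in Section~\ref{sec-witten} without a formal proof environment, treating it as a routine variant of Theorem~\ref{thm-closed}, and what you wrote is exactly the adaptation of the first case of that proof, with $\psi_{\mathrm{sp}}$ substituted for $\psi_{\mathrm{max}}$, the shifted potential $\psi_1=\psi_0-\psi_{\mathrm{sp}}$ on the enclosed region $\Omega_1$, an application of Theorem~\ref{th1.3ga}, and finally Dirichlet domain monotonicity together with $\Lambda^D\leq\lambda_-^D$. Your remark that the ``$\lim$'' in \eqref{lim-Lambdasp} should be read as $\limsup$ is also correct, since the statement is an inequality and nothing forces the limit to exist.
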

This can be applied in our case. The curve consists  of  two symmetric curves joining the two saddle points. 
But  we can also prove that this is not the optimal result. We can indeed observe that along the boundary of the connected component of $\{x\in\Omega,\ \psi_0(x)<\psi_0 ({\mathbf x}_{\mathrm{sp}})\}$ containing $(0,0)$ the exterior normal derivative of $\psi_0$
is strictly positive (except at the saddle points). Hence Proposition~\ref{mainprop} (with $\psi_\omega = \psi-\psi_{\mathrm{sp}}$)  shows that we can improve the upper bound:
 \begin{equation} \label{lim-Lambdaimp} 
\lim_{h\rightarrow 0} h \log  \Lambda^D(h,B,\Omega) < - 2 (  \psi_{\mathrm{sp}} - \psi_{\mathrm{min}})\,.
\end{equation} 

This example explicitly shows that the Witten Laplacian upper bound is \emph{not} optimal for our example.
\begin{remark} 
 More generic situations can be considered by killing the symmetry by addition of a small perturbation to our \enquote{symmetric example}, by introducing for $\epsilon >0$.
\[
\psi_1(x_1,x_2;\epsilon)= (1-x_1^2-x_2^2) \bigl(6 x_1^2 + 3 x_2^2 - x_2^4 -1+ \epsilon x_1 (x_2-\sqrt{3/2})\bigr).
\]
In this case, this is the saddle point with lowest energy which has to be used for applying Theorem \ref{thm-closedsp}. See Figure~\ref{fig:psiasym}.
\end{remark}

\begin{figure}[htp]
\centering
\includegraphics[width=0.5\textwidth]{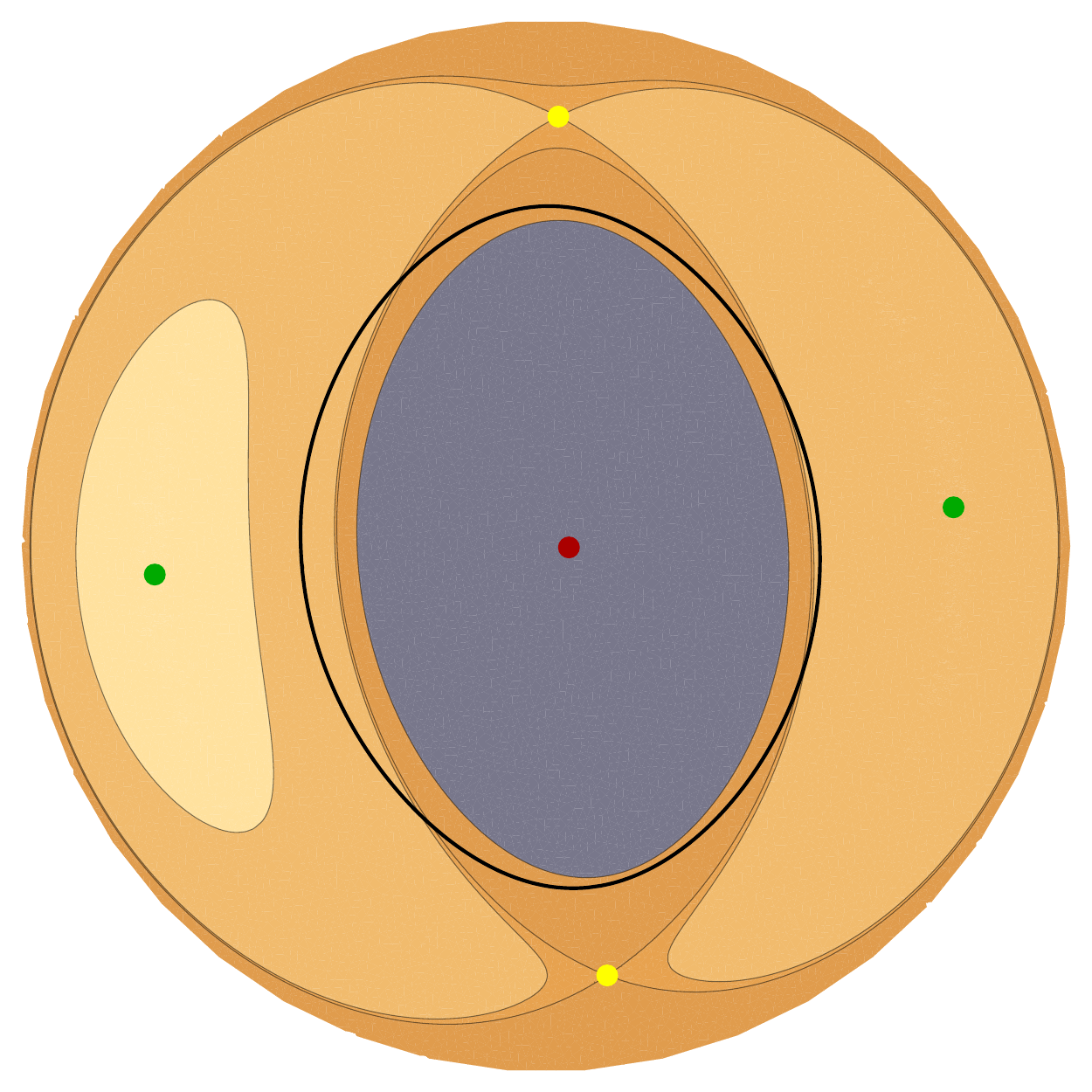}
\caption{An asymmetric example with $\epsilon =1/2$:
The two saddle points (yellow) and the two maxima (green) have no more the same energy. 
The perturbation has been chosen in order to keep one saddle point fixed. The bolder black curve
is the zero set of $\Delta\psi_0$.}
\label{fig:psiasym}
\end{figure}

\begin{remark}
This example could suggest that a candidate to be the optimal set with respect to the \enquote{pushing the boundary} procedure could be the basin of attraction of $\psi_0$ relative to the minimum (see Figure~\ref{fig:basin}). The boundary of this basin consists of four integral curves for the vector field $\nabla \psi_0$ joining the saddle points  to the maxima. We will see in Section~\ref{s8} that this is not in general the right candidate.
\end{remark}

\begin{figure}[htp]
\centering
\includegraphics[width=0.5\textwidth]{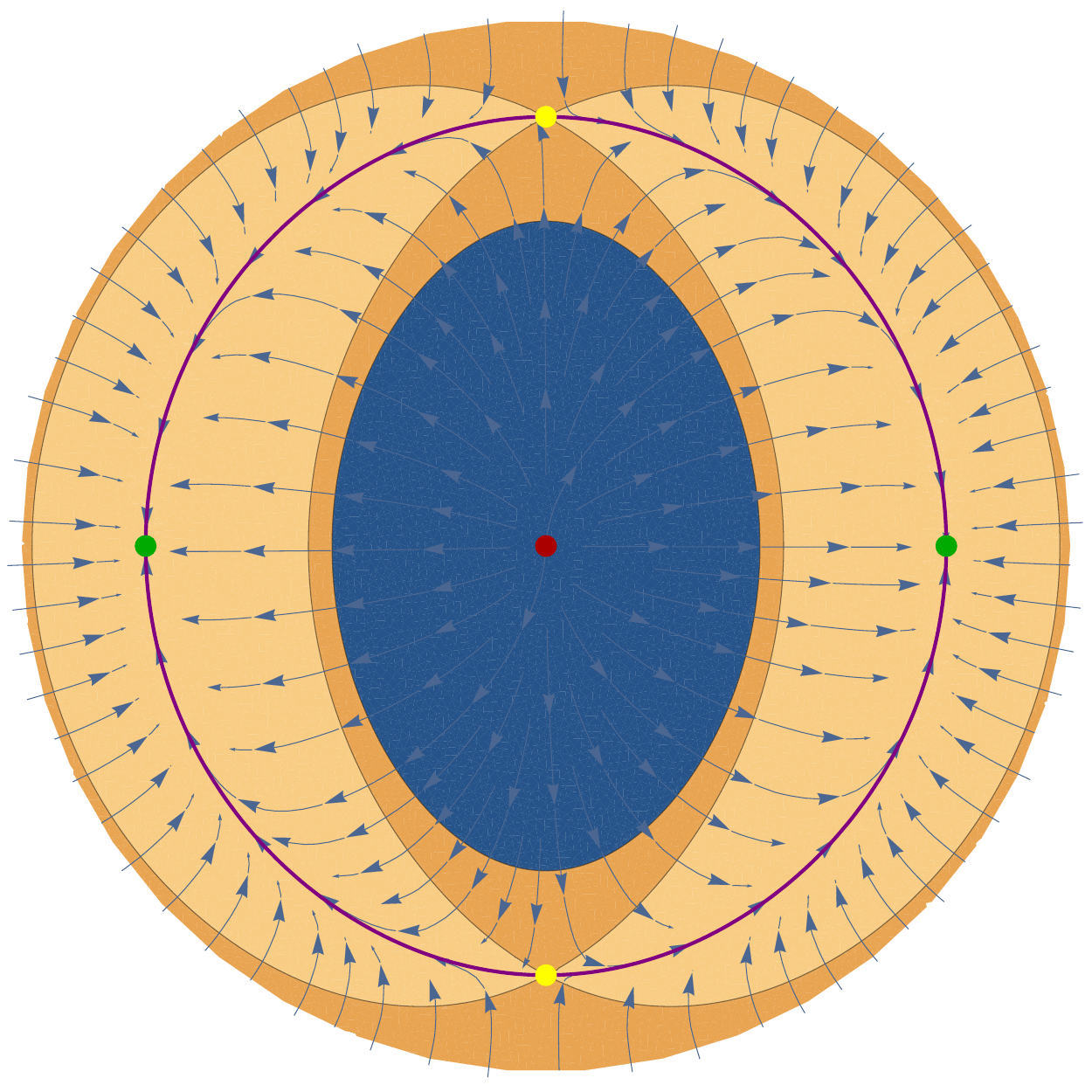}
\caption{The symmetric picture with integral curves of $\nabla \psi_0$. The boundary of the basin of attraction is plotted in purple and is $C^\infty$ except at the maxima.}
\label{fig:basin}
\end{figure}

%%%%%%%%%%%%%%%%%%%%%%%%%%%%%%%%%%%%%%%%%%%%%%%%%%%%%%
\section{More examples}\label{s8}

\subsection{A deformation argument revisited}
A natural idea to extend the deformation argument is to consider the family of subdomains of $\Omega$ defined by
\[
\mathcal G = \{ \omega \subset \Omega, \, \partial\omega \in C_{\mathrm{pw}}^{2,+}: \  \Delta \psi = B \ \text{in} \ \omega\,  \mbox{ and } \, \psi |_{\partial\omega} =0 \ \Rightarrow \ \inf_\omega  \psi < 0  \} \, .
\] 
With this new definition, we have

\begin{proposition}\label{mainpropbis}
Let $\omega\in \mathcal G$   and let $\psi_\omega$ be the solution of $\Delta \psi = B$ in $\omega$ such that $ \psi_\omega = 0$ on  $\partial\omega$. If $\partial_\nu \psi_\omega >0$ at some regular point $M_\omega$ of $\partial \omega \cap \Omega$,  there exists $\widehat\omega \supset\omega$ with $\partial\widehat\omega  \in C_{\mathrm{pw}}^{2,+}$  such that the solution to \eqref{psitilde} satisfies
\begin{equation}\label{eq:decbis}
\inf_{\widehat\omega} \psi_{\widehat \omega} <  \inf_\omega \psi_\omega\,.
\end{equation}
Hence $\widehat \omega \in \mathcal G$.
\end{proposition}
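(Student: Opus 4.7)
The plan is to reuse the construction from the proof of Proposition~\ref{mainprop} almost verbatim, and then observe that the conclusion required here is strictly weaker, so that no smallness argument on $t_0$ is needed at the end. The only nontrivial input is the strict ordering $\psi_{\widehat\omega} < \psi_\omega$ inside $\omega$, which is a consequence of the Hopf-type boundary condition and the maximum principle, exactly as before.

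More precisely, first I would extend the outer unit normal at the regular point $M_\omega$ to a smooth vector field $X_0$ defined in a tubular neighbourhood of $\partial\omega$ near $M_\omega$, cut it off with $\theta\in C_0^\infty(\R^2)$ supported in a small ball around $M_\omega$ and equal to $1$ at $M_\omega$, and let $\Phi_t$ be the flow generated by $X:=\theta X_0$. Set $\widehat\omega:=\Phi_{t_0}(\omega)$ for a small $t_0>0$, chosen so that $\widehat\omega\subset\Omega$, and let $\psi_{\widehat\omega}$ solve \eqref{psitilde}. Standard elliptic regularity together with Sobolev embedding gives that $\psi_\omega$ and $\psi_{\widehat\omega}\circ\Phi_{t_0}^{-1}$ are arbitrarily close in $C^{1,+}$ on $\overline{\omega}\cap B(M_\omega,\epsilon)$ as $t_0\to 0$, using that $\partial\omega$ is $C^{2,+}$ near $M_\omega$.

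Next, because $\partial_\nu\psi_\omega(M_\omega)>0$ and $\psi_\omega=0$ on $\partial\omega\cap B(M_\omega,\epsilon)$, the $C^{1,+}$ convergence forces $\psi_{\widehat\omega}<0$ on $(\widehat\omega\setminus\omega)\cap B(M_\omega,\epsilon)$ for $t_0$ small enough. The difference $\psi_{\widehat\omega}-\psi_\omega$ is then harmonic in $\omega$, vanishes on the part of $\partial\omega$ outside $B(M_\omega,\epsilon)$, and is strictly negative on a portion of $\partial\omega\cap B(M_\omega,\epsilon)$. The strong maximum principle yields $\psi_{\widehat\omega}<\psi_\omega$ throughout $\omega$.

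In particular, picking a point $x_0\in\omega$ at which $\psi_\omega(x_0)=\inf_\omega\psi_\omega<0$, we get
\[
\inf_{\widehat\omega}\psi_{\widehat\omega}\leq\psi_{\widehat\omega}(x_0)<\psi_\omega(x_0)=\inf_\omega\psi_\omega,
\]
which proves \eqref{eq:decbis} and, since $\inf_\omega\psi_\omega<0$, shows that $\inf_{\widehat\omega}\psi_{\widehat\omega}<0$, i.e.~$\widehat\omega\in\mathcal G$. The main (and essentially only) subtlety is the $C^{1,+}$ continuous dependence of $\psi_{\widehat\omega}\circ\Phi_{t_0}^{-1}$ on $t_0$ near $M_\omega$; compared with Proposition~\ref{mainprop} we no longer need to worry about the sign of $\psi_{\widehat\omega}$ on the whole of $\widehat\omega$, which is precisely why the relaxed class $\mathcal G$ behaves better under this deformation.
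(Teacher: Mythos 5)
Your proof is correct and follows exactly the same route as the paper, which simply states that the proof is identical to that of Proposition~\ref{mainprop}. Your added observation — that since $\mathcal G$ only requires $\inf_{\widehat\omega}\psi_{\widehat\omega}<0$, this follows immediately from $\psi_{\widehat\omega}<\psi_\omega$ evaluated at a minimizer of $\psi_\omega$, so the final smallness-of-$t_0$ argument used in Proposition~\ref{mainprop} to keep $\psi_{\widehat\omega}<0$ globally can be dropped — is a correct and worthwhile clarification of why the relaxed class $\mathcal G$ is stable under this deformation.
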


\begin{proof} The proof is identical to that of Proposition~\ref{mainprop}. 
\end{proof}
%\newpage 

\subsection{An example with one saddle point}
\subsubsection{Critical points and level sets}
Let us consider the function
\[
\psi (x_1,x_2) = - (x_1^2+ x_2^2 - 2 x_1)^2 +  (x_1^2 + x_2^2)\,.
\]
This function does not vanish anymore on boundary of the unit disk, but we can take as open set $\Omega$ the domain delimited by a level set $\psi^{-1}(c)$ for some $c< 0$ to be determined later.
We note that for $c=0$, $\psi^{-1} (0)$ has a simple description in polar coordinates and is described by
\[
r = 2 \cos \theta \pm 1\,.
\]
The function $\psi$ itself becomes
\[
\tilde \psi (r,\theta) = r^2\big[ 1 - (r-2\cos \theta)^2 \big]\,.
\]
We can now look at the critical points.
At $(0,0)$, the Hessian of $\psi$ is given by
\[
\Hess \psi (0,0):= \left(\begin{array}{cc} - 6& 0\\0 & 2\end{array}\right)
\]
Hence $(0,0)$ is a saddle point.
For the other critical points, the figure suggests that the critical points are on $\{x_2=0\}$ (we have $\partial_{x_2} \psi (x_1,0)=0$), see Figure~\ref{fig:psiasym2}. Hence we look at the critical points of 
\[
x_1 \mapsto \psi(x_1,0)= x_1^2 \,[-(x_1-2)^2 +1]= -x_1^2 (x_1-1)(x_1-3)\,
\]
We have
\[
\partial_{x_1} \psi (x_1,0) = - 4 x_1 \Big( x_1^2 - 3 x_1 + \frac 32\Bigr) 
= - 4 x_1 \Bigl(\Bigl( x_1-\frac 32\Bigr)^2  -\frac{3}{4}\Bigr)  \,.
\]
The critical points are on the line $\{x_2=0\}$ and
given by $x_{min} = (\frac{3}{2} - \frac{\sqrt{3}}{2}, 0)$ and $x_{max} = (\frac{3}{2} + \frac{\sqrt{3}}{2}, 0)$. These points are non degenerate extrema. One can also verify that there are no other critical points.
We now explicit the choice of $c$, which should satisfy
\[
- \frac 34 (2 \sqrt{3} -3) = \psi(x_{min}) < c< 0 \,.
\]
Therefore we choose $c=-0.3$. The outer contours in Figures \ref{fig:psiasym2} and \ref{fig:psiasym3} are given by $\psi^{-1}(-0.3)$.

\begin{figure}[htp]
\centering
\includegraphics[width=0.5\textwidth]{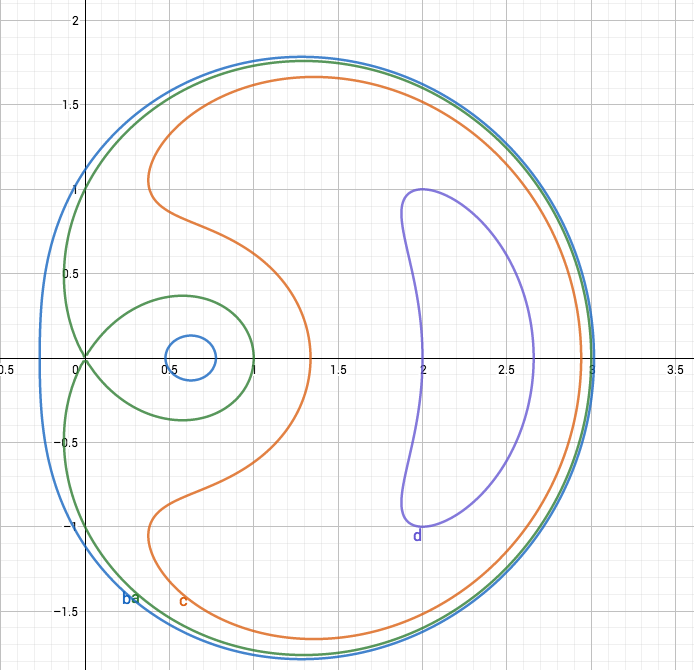}
\caption{$\psi (x_1,x_2)= - (x_1^2+x_2^2-2 x_1)^2+ (x_1^2+x_2^2)$:
The level sets are computed (using Geogebra Dynamics-Mathematics) for $c=- 0.3$ (blue), $c=0$ (green), $c=1$ (orange) and $c=4$ (violet).}
\label{fig:psiasym2}
\end{figure}

We can also compute the corresponding magnetic field  and get
\[
\Delta \psi (x_1,x_2) = 16 \Bigl(-(x_1-1)^2 - x^2_2  + \frac 34\Bigr)\,.
\]
Hence the zero set of the magnetic field is a circle centred at $(1,0)$ of radius $\sqrt{3}/2$.

\begin{figure}[htp]
\centering
\includegraphics[width=0.5\textwidth]{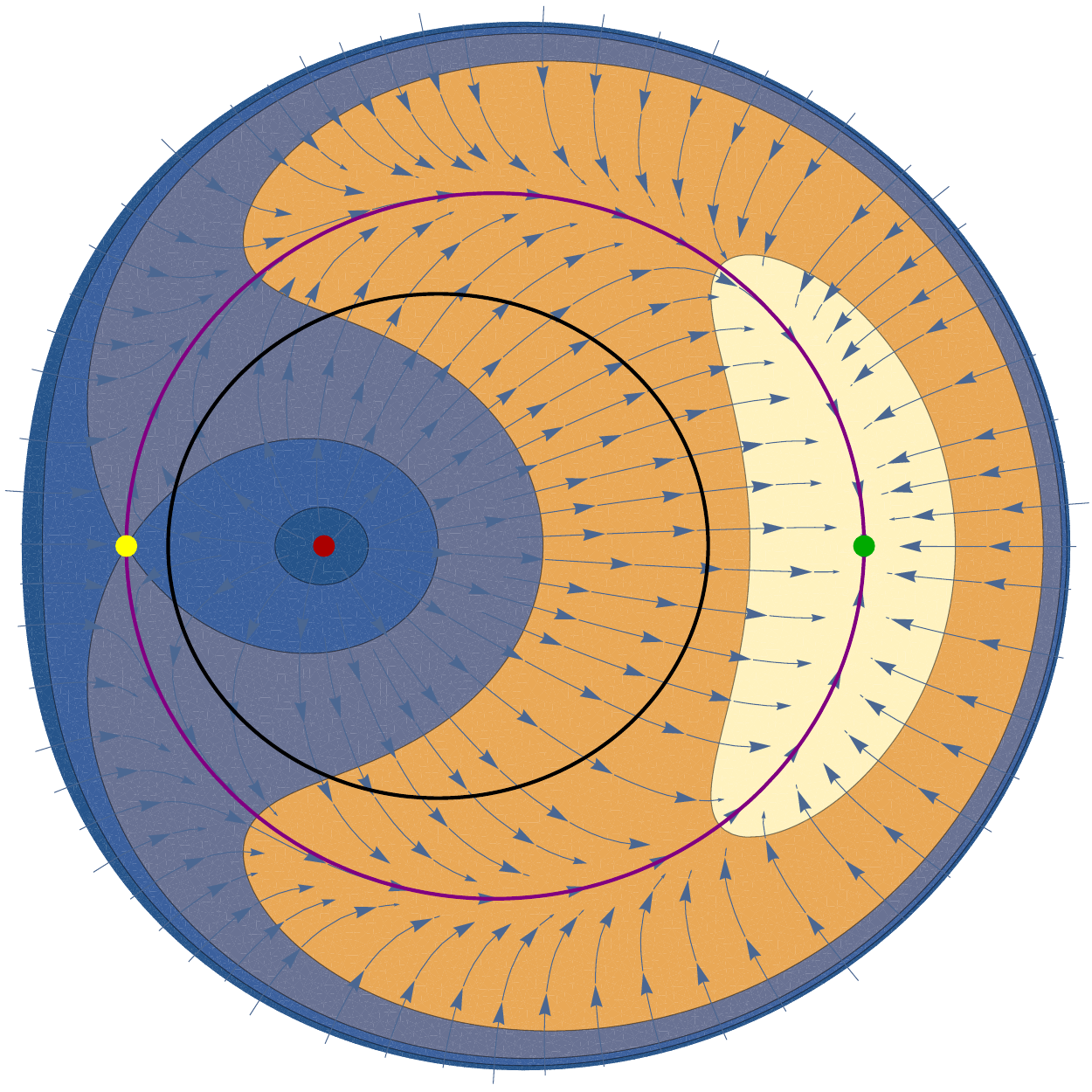}
\caption{  Integral curves of the vector field $\nabla \psi$, where $\psi (x_1,x_2)= - (x_1^2+x_2^2-2 x_1)^2+ (x_1^2+x_2^2)$, and basin of attraction. The orange part and the light yellow parts are regions where the
 function is between 1 and 4 and greater than 4, respectively.}
\label{fig:psiasym3}
\end{figure}

\subsubsection{Morse theory}
When applying the  previously given various criteria, we first consider the connected component $\omega_{ms}$ of the minimum in $\Omega \setminus \psi^{(-1)}(0)$ and the rate of decay is bounded below by $-\psi_{min}$.

To improve the result, we consider the open set 
\[
\omega_{su}:=W^s(x_{min}) \cap W^u(x_{max})\,,
\]
 where $W^s(x_{min})$ is the stable manifold associated with the 
 minimum $x_{min}$ and $W^u(x_{max})$ is the unstable manifold associated with $x_{max}$. Its boundary consists of the two  integral curves relating the saddle point $x_{s} =(0,0)$ to $x_{max}$. We note that along these curves $\psi$ is increasing from $x_{s}$ to $x_{max}$. Note that the curve has vertical tangent at $x_{s}$ and $x_{max}$ and  the boundary of $\omega_{su}$ is $C^\infty$ except possibly at $x_{max}$.  At this point, the analysis of the curve is deduced from the analysis of the Hessian of $\psi$ at the maximum point:
 \[
 \Hess \psi (x_{max})= \left( \begin{array}{cc} - 6 (1+\sqrt{3}) & 0\\ 0 & 2 (1-\sqrt{3}) \end{array}\right)\,.
 \]
 By Sternberg's linearization Theorem \cite{St}, observing that the eigenvalues $\lambda_1=- 6 (1+\sqrt{3}) $ and $\lambda_2= 2 (1-\sqrt{3})$ satisfy the non resonance condition, the regularity
  question about $\partial \omega_{su}$ is sent after conjugation by a local $C^\infty$-diffeomorphism $\Phi$ such that $\Phi (x_{max})=(0,0)$ to the analysis of the linear case.
 The trajectories are given by 
 \[
y_1(t) = \alpha_0  \exp - 6 (1+\sqrt{3})  t \,,\, y_2(t) =  \beta_0  \exp 2 (1-\sqrt{3})  t \,.
 \]
If $\beta_0\neq 0$, we observe that  
 \[
 y_1(t)  = \alpha_0 ( y_2(t)/\beta_0)^\rho\,, 
 \]
 where 
 $
 \rho= 3 (\sqrt{3} +2)\,
  $.
 If $\beta_0 =0$, then the curve is  $y_2=0$.
 Coming back to our boundary of $\omega_{su}$, we see from above that the boundary  $\Phi_\star (\partial \omega_{su})$  is indeed in $C^{11,+}$ (it is locally given by  $y_1 =  \gamma  |y_2|^\rho $  with $\gamma < 0$). \\
 This achieves the proof that $\omega_{su}$ has actually $C^{11,+}$ regularity.\\

We now denote by $\psi_{su}$ the solution of
\[
\Delta \psi_{su} = B (x) \mbox{ in } \omega_{su}\,,\, \psi_{su} =0 \mbox{ on } \partial \omega_{su}\,.
\]
 
 We can then compare $\psi_{su}$ and $\psi$. $\psi_{su} -\psi$ is harmonic in $\omega_{su}$ and by the maximum principle:
\[
 -\psi_{max} < \psi_{su} -\psi < 0 \,,
\]
 which implies
 \begin{equation}\label{strictinequalitya}
- \Osc (\psi) = \psi_{min} -\psi_{max} <  \inf \psi_{su} < \psi_{min}< 0\,.
\end{equation}
 In particular, $- \Osc (\psi)  >  \inf \psi_{su} $.
 Applying now our criterion to the Dirichlet realization of Pauli to $\omega_{su}$, we get

 \begin{equation}\label{strictinequalityb}
\begin{aligned}
\psi_{max} -\psi_{min} & \geq  \frac 12  \limsup_{h\rightarrow 0} \left(- h \log \lambda_-^D(h, B, \Omega)\right) \\
& \geq \frac 12  \liminf_{h\rightarrow 0} \left(- h \log \lambda_-^D(h, B,\Omega)\right)\\
&\geq - \inf \psi_{su} > -  \psi_{min} > c -\psi_{min} \, ,
\end{aligned}
\end{equation}
where the first inequality follows from Theorem \ref{th1.3} applied to $\Omega$ while the third inequality follows from Theorem \ref{th1.3ga} 
and domain monotonicity of the first eigenvalue. 
Hence we have still a window of possible values for $ \frac 12  \lim_{h\rightarrow 0} \left(- h \log \lambda_-^D(h, B)\right) $
  given by the interval $[- \inf \psi_{su} , \Osc \psi]$.
 
\begin{remark} 
Some remarks are in place:
\begin{itemize}[--]
\item  Similar considerations can be done for the other examples. This shows that our considerations are rather generic.
\item We do not know if $\partial_\nu \psi_{su}$ vanishes or has constant sign on $\partial \omega_{su}$. As a consequence, we do not know if we can continue to push the boundary and apply Proposition \ref{mainpropbis}.
\end{itemize}
\end{remark}

%%%%%%%%%%%%%%%%%%%%%%%%%%%%%%%%%%%%%%%%%%%%%%%%%%%%%%%%%%%%%%%%%%%%%%%%%%%%%%%%
\section{Conclusion}
We have initially obtained from the previous papers \cite{EKP,HPS, HPS1} two natural upper bounds and a natural lower bound. We have shown that in general these two initial upper bounds cannot be optimal. 
We have also presented particular cases where the results are optimal. In all these cases, the oscillation of $\psi_0$ is shown to be optimal.

Numerically it could be interesting to see how to \enquote{push the boundary} in Proposition~\ref{mainprop} in order to get a maximal domain.

Finally, as already observed in~\cite{HPS}, one can also expect to get upper bounds by using previous results devoted to the asymptotic of the ground state energy of the Witten Laplacian (see~\cite{BEGK,BGK,FrWe,HKN,HeNi,Mi, W} and the quite recent note of B.~Nectoux \cite{Ne}). This was analyzed in 
Section~\ref{sec-witten} and improved in Section~\ref{s8}.

%%%%%%%%%%%%%%%%%%%%%%%%%%%%%%%%%%%
\section*{Acknowledgements}
The authors would like to thank M.~Dauge for useful discussions about the paper \cite{D}.
B.~H. would like to thank D.~Le~Peutrec for discussions around his work with G. Di Gesu, T. Leli\`evre and B.~Nectoux and N. Raymond for discussions on \cite{BTRS}. H.~K.  has been partially supported by Gruppo Nazionale per Analisi Matematica, la Probabilit\`a e le loro Applicazioni (GNAMPA) of the Istituto Nazionale di Alta Matematica (INdAM). The support of  MIUR-PRIN2010-11 grant for the project ``Calcolo delle variazioni'' (H.~K.), is also gratefully acknowledged.

\bibliographystyle{plain}

\begin{thebibliography}{99}

\bibitem{BTRS} J.-M. Barbaroux, L. Le Treust, N. Raymond, and E. Stockmeyer.
\newblock On the semi-classical spectrum of the Dirichlet-Pauli operator.
\newblock  arXiv:1804.00903v1 (October 2018).


\bibitem{vdBBu} M. van den Berg and D. Bucur.
\newblock Sign changing solutions of Poisson's equation.
\newblock arXiv:1804.00903v1 (April 2018).


\bibitem {BEGK} A.~Bovier, M.~Eckhoff, V.~Gayrard, and  M.~Klein~:
 \newblock Metastability in reversible diffusion processes  I:
Sharp asymptotics for capacities and exit times.
\newblock  JEMS 6(4), 399--424 (2004).


\bibitem {BGK} A.~Bovier,  V.~Gayrard,   and  M.~Klein.
 \newblock Metastability in reversible diffusion processes  II:
Precise asymptotics for small eigenvalues.
\newblock JEMS 7(1),  69--99 (2004).

 \bibitem{D} M.~Dauge: 
  \newblock Neumann and mixed problems on curvilinear polyedra. 
 \newblock Integral Equations Oper. Theory. 15, 227--261, 1992.


\bibitem {EKP} T. Ekholm, H. Kova\v{r}\'ik, and F. Portmann.
\newblock Estimates for the lowest eigenvalue of magnetic Laplacians.
 \newblock  J. Math. Anal. Appl. 439 (1), 330--346 (2016). 

 
\bibitem {FrWe} M.I.  Freidlin and  A.D.  Wentzell.
\newblock  Random perturbations of dynamical systems. Transl. from the Russian by Joseph Szuecs. 2nd ed.
\newblock  Grundlehren der Mathematischen Wissenschaften. 260. New York (1998).  

\bibitem{GLLN1} G. Di Gesu, D. Le Peutrec, T. Leli\`evre and B. Nectoux.
\newblock Sharp asymptotics of the first exit point density.
\newblock arXiv:1706.08726 (2017).

\bibitem{GLLN2} G. Di Gesu, D. Le Peutrec, T. Leli\`evre and B. Nectoux.
\newblock  The exit from a metastable state; concentration of the exit point on the low energy saddle points.
\newblock In preparation.


\bibitem{HKN} B. Helffer, M. Klein and F. Nier.
\newblock Quantitative analysis of metastability in reversible
diffusion processes via  a Witten complex approach.
\newblock Matematica Contemporanea, 26, 41--85 (2004).



\bibitem{HeNi} B. Helffer and F. Nier.
\newblock Quantitative analysis of metastability 
in reversible diffusion processes via a Witten complex approach:
 the case with boundary.
\newblock  M\'em. Soc. Math. Fr. (N.S.) No. 105 (2006).
 

\bibitem{HPS} B. Helffer and M. Persson Sundqvist.
\newblock On the semi-classical analysis of the Dirichlet Pauli operator.
\newblock J. Math. Anal. Appl. 449 (1), 138--153 (2017).
 
\bibitem{HPS1} B. Helffer and M. Persson Sundqvist.
\newblock On the semi-classical analysis of the groundstate energy of the 
Dirichlet Pauli operator in non-simply connected domains.
\newblock Journal of Mathematical Sciences 226 (4), 531--544 (2017).

\bibitem {HSBott} B.~Helffer and J.~Sj{\"o}strand.
\newblock A proof of the Bott inequalities.
\newblock Algebraic Analysis, Vol.1, Academic Press,  171-183 (1988). 

\bibitem{HP} A. Henrot and M. Pierre.
\newblock Variation et optimisation de formes --une analyse g\'eom\'etrique--
\newblock Math\'ematiques et Applications. 48. Springer (2005). 

\bibitem{SL} S.~Luttrell.
\newblock \url{https://mathematica.stackexchange.com/a/154435/21414}
(fetched September 18, 2017).

\bibitem{Mi} L.~Michel.
\newblock About small eigenvalues of Witten Laplacians.
\newblock arXiv:1702.01837 (2017).

 \bibitem{Ne} B.~Nectoux.
\newblock Sharp estimate of the mean exit time of a bounded domain in the zero white noise limit.
\newblock arXiv:1710.07510 (2017). 


\bibitem{St} S.~Sternberg.
\newblock On the structure of local homeomorphisms of Euclidean n-space, II
\newblock American Journal of Mathematics, Vol. 80, No. 3, 623--631 (1958).


\bibitem {W} E.~Witten.
\newblock Supersymmetry and Morse inequalities.
\newblock J. Diff. Geom. 17, 661--692 (1982).

\end{thebibliography}

\end{document}